\newtheorem{thm}{Theorem}[section]
\newtheorem{prop}[thm]{Proposition}
\newtheorem{lem}[thm]{Lemma}
\newtheorem{cor}[thm]{Corollary}
\theoremstyle{definition}
\theoremstyle{remark}
\newtheorem*{rmk}{Remark}
\theoremstyle{definition}
\newtheorem{defn}[thm]{Definition}
\numberwithin{equation}{section}
\begin{document}

\title[Restriction formula]{On the restriction formula}

\thanks{}

\author[Xiankui Meng]{Xiankui Meng}
\address{Xiankui Meng: School of Science, Beijing University of Posts and Telecommunications, Beijing 100876, People's Republic of China.}
\email{mengxiankui@amss.ac.cn}

\author[Xiangyu Zhou]{Xiangyu Zhou}
\address{Xiangyu Zhou: Institute of Mathematics, AMSS, and Hua Loo-Keng Key Laboratory of Mathematics, Chinese Academy of Sciences, Beijing, 100190, China}
\email{xyzhou@math.ac.cn}

\subjclass[2010]{}

\keywords{}

\dedicatory{}

\date{}

\begin{abstract}
Let $\varphi$ be a quasi-psh function on a complex manifold $X$ and let $S\subset X$ be a complex submanifold.
Then the multiplier ideal sheaves $\mathcal{I}(\varphi|_S)\subset\mathcal{I}(\varphi)|_{S}$ and the complex singularity exponents $c_{x}\left(\varphi|_{S}\right)\leqslant c_{x}(\varphi)$ by Ohsawa-Takegoshi $L^{2}$ extension theorem.
An interesting question is to know whether it is possible to get equalities in the above formulas.
In the present article, we show that the answer is positive when $S$ is chosen outside a measure zero set in a suitable projective space.
\end{abstract}

\maketitle


\section{ Introduction}

Let $X$ be a complex manifold.
Throughout this paper, our complex manifolds are always assumed to be second countable.
A quasi-plurisubharmonic (quasi-psh) function on $X$ is by definition a function $\varphi$ which is locally equal to the sum of a plurisubharmonic function and of a smooth function.
The multiplier ideal sheaf $\mathcal{I}(\varphi)$
is the ideal subsheaf of $\mathscr{O}_{X}$ defined by
\[\mathcal{I}(\varphi)_{x}=\{f\in\mathscr{O}_{X, x}; ~|f|^{2}e^{-2\varphi} ~\text{is integrable in a neighborhood of}~ x\},\]
It's well-known that $\mathcal{I}(\varphi)$ is a coherent analytic sheaf.

Another related invariant is complex singularity exponent.
For any compact set $K\subset X$, we introduce the complex singularity exponent of $\varphi$ on $K$ to be the nonnegative number
\begin{equation}
c_{K}(\varphi)=\sup\left\{c\geqslant 0; \quad e^{-2c\varphi} ~\text{is}~L^{1} ~\text{on a neighborhood of}~ K\right\}.
\end{equation}
If $\varphi=-\infty$ near some connected component of $K$, we put of course $c_{K}(\varphi)=0$.
Given a point $x\in X$, we write $c_{x}(\varphi)$ instead of $c_{\{x\}}(\varphi)$.

Multiplier ideal sheaves and complex singularity exponents associated to quasi-psh functions are basic objects in several complex variables and algebraic geometry. 
The restriction formulas for multiplier ideal sheaves and for complex singularity exponents are useful in the inductive arguments.
Various important and fundamental properties about them have been established by \cite{DEL00}, \cite{DeKo01}, \cite{Cao14}, \cite{FuMa16}, \cite{Fuj17}, \cite{Guan-Zhou20}, etc.
In the present article, we will discuss the restriction formulas in a more general setting.

We first give a generalized version of the Bertini theorem.
\begin{thm}\label{thm: MA}
Let $F$ be a holomorphic vector bundle over a complex manifold $X$ and let $W$ be a finite dimensional subspace of $H^{0}\left(X, F\right)$ such that $W$ generates all fibers $F_{x}, ~x\in X$.
Let $P(W)$ denote the projective space of $W$.
If $W$ has no non-vanishing sections, then $\dim X\geqslant\operatorname{rank}F$ and the set
\[Z=\Big\{[s]\in P(W) \big| ~s^{-1}(0) ~\text{is not smooth}\Big\}\]
has Lebesgue measure zero in $P(W)$. 
If, moreover, $X$ is compact, then $Z$ is analytic in $P(W)$.
\end{thm}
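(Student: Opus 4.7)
The plan is to run a Bertini--Sard argument on a universal incidence. I would introduce
\[
\tilde{\mathcal{I}}=\bigl\{(x,s)\in X\times W : s(x)=0\bigr\},
\]
realized as the preimage of the zero section of $F$ under the evaluation map $\operatorname{ev}:X\times W\to F$, $(x,s)\mapsto s(x)$. Since $W$ generates every fiber of $F$, a local trivialization shows that $\operatorname{ev}$ is a submersion, hence transverse to the zero section; consequently $\tilde{\mathcal{I}}$ is a smooth complex submanifold of $X\times W$ of dimension $\dim X+\dim W-\operatorname{rank}F$.

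Next I would analyze the second projection $\pi:\tilde{\mathcal{I}}\to W$, $(x,s)\mapsto s$. A direct tangent-space calculation (again in a local trivialization) shows that $d\pi_{(x,s)}$ is surjective if and only if $ds_x:T_xX\to F_x$ is surjective, which is precisely the condition that $s^{-1}(0)$ be a smooth submanifold of codimension $\operatorname{rank}F$ at $x$. In other words, $[s]\in Z$ exactly when $s$ is a critical value of $\pi$. The hypothesis that every section in $W$ has a zero says $\pi$ is surjective, which forces $\dim\tilde{\mathcal{I}}\geqslant\dim W$ and hence $\dim X\geqslant\operatorname{rank}F$. Sard's theorem then yields that the set of critical values of $\pi$ has Lebesgue measure zero in $W$; this set is a $\mathbb{C}^*$-invariant cone whose projection to $P(W)$ is exactly $Z$, and a polar-coordinate computation shows that a cone of Lebesgue measure zero in $W$ projects to a set of measure zero in $P(W)$.

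For the last assertion, when $X$ is compact I would pass to the projectivized incidence $\mathcal{I}\subset X\times P(W)$ together with the second projection $\pi_2:\mathcal{I}\to P(W)$, which is then a proper holomorphic map. The critical locus of $\pi_2$ is a closed analytic subset of $\mathcal{I}$ cut out by the vanishing of appropriate Jacobian minors, and Remmert's proper mapping theorem gives that its image $Z$ is analytic in $P(W)$. The main technical point of the argument is the transversality/submersion identification in the first two steps, i.e.\ the smoothness of $\tilde{\mathcal{I}}$ and the characterization of the critical locus of $\pi$ as the non-smooth locus; once these are in hand, Sard's theorem and Remmert's theorem dispose of the remaining assertions cleanly.
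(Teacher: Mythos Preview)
Your argument is correct and follows essentially the same approach as the paper: both build the incidence variety $\{(x,[s]):s(x)=0\}$ (which the paper realizes as the projectivized kernel bundle $P(E)\subset X\times P(W)$), identify the non-smooth zero loci with the critical values of the projection to $P(W)$, and then invoke Sard's theorem for the measure-zero claim and properness plus Remmert for the analytic claim in the compact case. The only cosmetic difference is that you first set up the affine incidence in $X\times W$ and then pass to the projectivization, whereas the paper works projectively from the start and packages Sard and Remmert into a single quoted lemma.
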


The restriction formula on multiplier ideal sheaves is also a Bertini type result.
Motived by \cite{Cao14}, we shall give a simple approach to this problem. 
The main idea of the proof is to apply the Fubini theorem.

\begin{thm}\label{thm: MB}
Let $\varphi$ be a quasi-psh function on a complex manifold $X$.
Let $F$ be a holomorphic vector bundle over $X$ and let $W$ be a finite dimensional subspace of $H^{0}\left(X, F\right)$ such that $W$ generates all fibers $F_{x}, ~x\in X$.
Let $P(W)$ denote the projective space of $W$ and 
\begin{equation*}
B=\Big\{[s]\in P(W) \big| ~S=s^{-1}(0) ~\text{is smooth and}~\mathcal{I}(\varphi|_S)=\mathcal{I}(\varphi)|_{S}\Big\}.
\end{equation*}
If $W$ has no non-vanishing sections, then $P(W)\setminus B$ has measure zero in $P(W)$.
\end{thm}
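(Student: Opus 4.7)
The plan is to split the claimed equality into the two inclusions and handle them separately. By Theorem~\ref{thm: MA}, the set $Z$ of $[s]\in P(W)$ for which $S=s^{-1}(0)$ fails to be smooth of codimension $r=\operatorname{rank}F$ has measure zero, so it suffices to work over $[s]\in P(W)\setminus Z$. The inclusion $\mathcal{I}(\varphi|_S)\subset\mathcal{I}(\varphi)|_S$ is the standard Ohsawa--Takegoshi $L^{2}$ extension consequence recalled in the abstract, so the substance of the theorem is the reverse inclusion $\mathcal{I}(\varphi)|_S\subset\mathcal{I}(\varphi|_S)$ for almost every $[s]$.

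To reduce to countably many test sections I would invoke the coherence of $\mathcal{I}(\varphi)$: choose a countable cover of $X$ by relatively compact open sets $U_\alpha'\Subset U_\alpha$ and, on each $U_\alpha$, finitely many generators $f_{\alpha,1},\ldots,f_{\alpha,k_\alpha}\in\Gamma(U_\alpha,\mathcal{I}(\varphi))$ with $\int_{U_\alpha'}|f_{\alpha,j}|^{2}e^{-2\varphi}<\infty$. If, for each pair $(\alpha,j)$, the set
\[
E_{\alpha,j}=\Big\{[s]\in P(W)\setminus Z\;:\;\int_{S\cap U_\alpha'}|f_{\alpha,j}|^{2}e^{-2\varphi}\,dV_{S}=+\infty\Big\}
\]
has measure zero, then for any $[s]$ outside the null set $Z\cup\bigcup_{\alpha,j}E_{\alpha,j}$ the restrictions $f_{\alpha,j}|_{S\cap U_\alpha'}$ both lie in $\mathcal{I}(\varphi|_S)$ and still generate $\mathcal{I}(\varphi)|_S$ stalkwise, giving the desired sheaf equality.

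The heart of the argument is a Fubini computation on the incidence variety
\[
\mathcal{X}=\{(x,[s])\in X\times P(W):s(x)=0\}.
\]
Surjectivity of $\mathrm{ev}_x\colon W\to F_x$ makes the projection $p\colon\mathcal{X}\to X$ into a holomorphic $\mathbb{P}^{N-r-1}$-bundle ($N=\dim W$), so $\mathcal{X}$ is smooth. Fix $U=U_\alpha'$, set $\mathcal{X}_U=p^{-1}(U)$, and equip $\mathcal{X}_U$ with a smooth volume form $d\mu$. Pushing forward along $p$ gives
\[
\int_{\mathcal{X}_U}|f(x)|^{2}e^{-2\varphi(x)}\,d\mu\leqslant C\int_U|f|^{2}e^{-2\varphi}<\infty
\]
for $f=f_{\alpha,j}$, with $C$ coming from the bounded volume of the $p$-fibers. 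Over $P(W)\setminus Z$ the projection $q\colon\mathcal{X}\to P(W)$ is a holomorphic submersion with fiber $S\cap U$, so Fubini (the coarea formula along $q$) converts this finite upper bound into finiteness of $\int_{S\cap U}|f|^{2}e^{-2\varphi}\,dV_{S}$ for almost every $[s]$, which is exactly $E_{\alpha,j}$ being null. The only delicate point, and the main obstacle, is the comparison of fiber measures: one must verify that the measure on each smooth fiber $S\cap U$ induced from $d\mu$ by the coarea decomposition is absolutely continuous with respect to $dV_S$ with density bounded on compact subsets of $q^{-1}(P(W)\setminus Z)$. This follows from the smoothness and positivity of the transverse Jacobian of $q$ on that set; once it is in place, the rest is bookkeeping.
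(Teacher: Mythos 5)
Your proposal is correct and follows essentially the same route as the paper's proof: the incidence manifold $\{(x,[s]):s(x)=0\}$ (the paper's $P(E)$), a Fubini argument along the projection to $P(W)$ over its regular locus, and coherence of $\mathcal{I}(\varphi)$ together with second countability to reduce to countably many local generators, charts, and compact fiber pieces. The ``delicate point'' you flag --- comparing the coarea fiber measure with $dV_{S}$ on compact pieces of the smooth fibers --- is exactly what the paper handles by passing to local product charts $\Omega_{p}\times G_{p}$ given by the submersion form of $\mu$ (inverse function theorem), where the relevant volume forms are equivalent and plain Fubini applies.
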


A special case of Theorem \ref{thm: MB} is the following statement.
\begin{cor}\label{cor: thin}
Let $\mathfrak{d}$ be a base point free linear system on a complex manifold $X$ with $1\leqslant\dim\mathfrak{d}<+\infty$, and let $\phi$ be a quasi-plurisubharmonic function on $X$. 
If we put
\[\mathfrak{b}=\Big\{S\in\mathfrak{d}; \quad S~\text{is smooth and} ~\mathcal{I}(\varphi)|_{S}=\mathcal{I}(\varphi|_{S}) \Big\},\]
then $\mathfrak{d}\setminus\mathfrak{b}$ has measure zero in $\mathfrak{d}$.
\end{cor}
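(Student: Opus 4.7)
The plan is to realise Corollary \ref{cor: thin} as the line bundle case of Theorem \ref{thm: MB}. First, I would use the standard description of a linear system to write $\mathfrak{d} = P(W)$, where $W$ is a finite-dimensional subspace of $H^{0}(X, L)$ for some holomorphic line bundle $L$ on $X$, with $\dim_{\mathbb{C}} W \geq 2$; the correspondence $[s] \leftrightarrow s^{-1}(0)$ identifies divisors in $\mathfrak{d}$ with points of $P(W)$. The base point freeness hypothesis on $\mathfrak{d}$ translates exactly to the statement that $W$ generates every one-dimensional fibre $L_{x}$, i.e.\ that $W$ generates all fibres of the rank-one vector bundle $L$. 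Under this dictionary, the set $\mathfrak{b}$ of the Corollary is identified with the set $B$ appearing in Theorem \ref{thm: MB} applied to $F = L$ and this $W$.

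If $W$ contains no non-vanishing section, Theorem \ref{thm: MB} applies verbatim and $\mathfrak{d}\setminus\mathfrak{b}$ has measure zero. Otherwise, any $[s_{0}]\in P(W)$ represented by a nowhere vanishing section $s_{0}$ has $s_{0}^{-1}(0) = \emptyset$, which is vacuously smooth, and the two sides of $\mathcal{I}(\varphi)|_{S} = \mathcal{I}(\varphi|_{S})$ are trivially equal; hence every such $[s_{0}]$ lies in $\mathfrak{b}$ automatically. Consequently $\mathfrak{d}\setminus\mathfrak{b}$ is disjoint from the locus of nowhere vanishing sections, and I would run the Fubini-theorem argument underlying Theorem \ref{thm: MB} on the complementary locus of sections that vanish somewhere to conclude.

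The main obstacle is minor and purely a matter of bookkeeping around the ``no non-vanishing sections'' hypothesis of Theorem \ref{thm: MB}; the substantive analytic content of the Corollary is entirely furnished by Theorem \ref{thm: MB}, and once the identification $\mathfrak{d} = P(W)$ and $\mathfrak{b} = B$ is in place, no further work is required.
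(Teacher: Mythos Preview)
Your proposal is correct and matches the paper's approach exactly: the paper presents this corollary as an immediate special case of Theorem~\ref{thm: MB} (restated and proved as Theorem~\ref{thm: Res}) under the identification $\mathfrak{d}=P(W)$ for $W\subset H^{0}(X,L)$, and gives no separate argument. Your extra bookkeeping around the ``no non-vanishing sections'' hypothesis is a point the paper leaves implicit; your treatment of it is sound (and indeed the proof of Theorem~\ref{thm: Res} goes through unchanged in that case, since surjectivity of $\mu$ is not needed for the measure-zero conclusion and $\dim X\geqslant\operatorname{rank}F=1$ is automatic).
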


In case $X$ is compact, Fujino and Matsumura proved that $\mathfrak{b}$ is dense in $\mathfrak{d}$ in the classical topology (cf. Theorem 1.10 in \cite{FuMa16}).
However, their arguments give no information on the set $\mathfrak{d}\setminus\mathfrak{b}$.
The main idea of our approach is to use Fubini's theorem and hence we can prove the set $\mathfrak{d}\setminus\mathfrak{b}$ has measure zero.
In fact, there is a problem due to S\'ebastien Boucksom:
is $\mathfrak{d}\setminus\mathfrak{b}$ a pluripolar set?

The restriction formula on complex singularity exponents is given by the following result.
\begin{thm}
Let $\varphi$ be a quasi-psh function on a complex manifold $X$.
Let $F$ be a holomorphic vector bundle over $X$ and let $W$ be a finite dimensional subspace of $H^{0}\left(X, F\right)$ such that $W$ generates all fibers $F_{x}, ~x\in X$.
Let $P(W)$ denote the projective space of $W$ and 
\begin{equation*}
Q=\Big\{[s]\in P(W) \big| ~S=s^{-1}(0) ~\text{is smooth and}~c_{x}\left(\varphi|_S\right)=c_{x}(\varphi), ~\forall x\in S\Big\}.
\end{equation*}
If $W$ has no non-vanishing sections, then $P(W)\setminus Q$ has measure zero in $P(W)$.
\end{thm}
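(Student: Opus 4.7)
The plan is to reduce the statement to the restriction formula for multiplier ideal sheaves (Theorem~\ref{thm: MB}) by rescaling the quasi-psh function. The key observation is that $c_x(\varphi)$ admits the characterization $c_x(\varphi)=\sup\{c\geqslant 0 : 1\in\mathcal{I}(c\varphi)_x\}$ (since $1\in\mathcal{I}(c\varphi)_x$ precisely means that $e^{-2c\varphi}$ is integrable near $x$), and analogously for $c_x(\varphi|_S)$. Because this supremum can be tested against the countable cofinal family $\mathbb{Q}_{>0}$, it is enough to control $\mathcal{I}(c\varphi|_S)$ versus $\mathcal{I}(c\varphi)|_{S}$ for every positive rational $c$ simultaneously on a full-measure set of parameters $[s]\in P(W)$.

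Concretely, for each $c\in\mathbb{Q}_{>0}$ the function $c\varphi$ is again quasi-psh, so Theorem~\ref{thm: MB} applied to $c\varphi$ yields a set
\[B_c=\Big\{[s]\in P(W) \big| ~S=s^{-1}(0) ~\text{is smooth and}~\mathcal{I}(c\varphi|_S)=\mathcal{I}(c\varphi)|_{S}\Big\}\]
whose complement in $P(W)$ has measure zero. Setting $B_{\ast}=\bigcap_{c\in\mathbb{Q}_{>0}}B_c$, the complement $P(W)\setminus B_{\ast}$ is a countable union of null sets and therefore also has measure zero.

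It remains to verify the inclusion $B_{\ast}\subset Q$. Fix $[s]\in B_{\ast}$ and $x\in S$; then $S$ is smooth at $x$. The inequality $c_x(\varphi|_S)\leqslant c_x(\varphi)$ is the easy direction recalled in the abstract. For the reverse, pick any rational $c$ with $0<c<c_x(\varphi)$; then $e^{-2c\varphi}$ is integrable near $x$ in $X$, so $1\in\mathcal{I}(c\varphi)_x$. Restricting to $S$ gives $1\in\mathcal{I}(c\varphi)|_{S,x}$, and the defining equality for $B_c$ upgrades this to $1\in\mathcal{I}(c\varphi|_S)_x$, i.e.\ $e^{-2c\varphi|_S}$ is integrable near $x$ on $S$, whence $c\leqslant c_x(\varphi|_S)$. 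Letting $c$ run through the rationals in $(0,c_x(\varphi))$ (or through all of $\mathbb{Q}_{>0}$ if $c_x(\varphi)=+\infty$) gives $c_x(\varphi)\leqslant c_x(\varphi|_S)$, and therefore $[s]\in Q$.

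Since Theorem~\ref{thm: MB} is used as a black box, the remaining work is essentially routine; the only point requiring attention is that a \emph{single} null set in $P(W)$ must suffice to enforce $c_x(\varphi|_S)=c_x(\varphi)$ at \emph{every} point $x\in S$, not just at almost every $x$. This uniformity over $x\in S$ is precisely what the sheaf-theoretic (rather than pointwise) formulation of Theorem~\ref{thm: MB} delivers, and it is what makes the countable intersection over rational $c$ succeed.
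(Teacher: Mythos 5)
Your argument is correct, and it proves the theorem by a genuinely different (more modular) route than the paper. The paper does not invoke Theorem~\ref{thm: MB} at all: it fixes a countable family of balls $\overline{B}_{jk}$ around a dense set of centers, and for each rational $c<c_{\overline{B}_{jk}}(\varphi)$ it re-runs the incidence-variety Fubini argument from the proof of Theorem~\ref{thm: Res} directly on the weight $e^{-2c\varphi}$, obtaining null sets $N_{j,k,c}$; the pointwise equality $c_{x}(\varphi|_{S})=c_{x}(\varphi)$ at every $x\in S$ is then recovered from the density of the ball centers, and the reverse inequality is quoted from Proposition~\ref{prop: mon}, exactly as you do. Your reduction instead uses the identity $c_{x}(\varphi)=\sup\{c\geqslant 0:\ 1\in\mathcal{I}(c\varphi)_{x}\}$, applies Theorem~\ref{thm: MB} to the quasi-psh functions $c\varphi$, $c\in\mathbb{Q}_{>0}$, and intersects the resulting full-measure sets; the sheaf-level (all-stalks) equality on each $B_{c}$ plays the role that the paper's countable family of balls plays, namely forcing the equality at every point of $S$ from a single null set. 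What each approach buys: yours is shorter and treats the multiplier-ideal theorem as a black box, at the cost of invoking (through that theorem's proof) coherence and local generators of $\mathcal{I}(c\varphi)$ for each rescaled weight; the paper's direct argument only ever needs the integrability of $e^{-2c\varphi}$ itself (the germ $1$), so it is marginally more economical and keeps the whole proof inside the incidence diagram. One small point you share with the paper: when quoting the monotonicity $c_{x}(\varphi|_{S})\leqslant c_{x}(\varphi)$ one should note the case $\varphi|_{S}\equiv-\infty$ near a component of $S$ (excluded in Proposition~\ref{prop: mon}); there the convention $c_{x}(\varphi|_{S})=0$ gives the inequality trivially, and on your set $B_{\ast}$ this degeneracy is in any case incompatible with $c_{x}(\varphi)>0$, so nothing is lost.
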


The restriction formula on multiplier ideal sheaves can be used to deduced an important exact sequence which is useful in the inductive arguments.

\begin{thm}
Let $L$ be a holomorphic line bundle over a complex manifold $X$ 
and let $W$ be a finite dimensional subspace of $H^{0}\left(X, L\right)$ such that $W$ generates all fibers $L_{x}, ~x\in X$. 
We denote by $P(W)$ the projective space of $W$.
Let $\varphi$ be a quasi-psh function on $X$ and let $Y_{i}, ~i\in I$ be the analytic subsets associated to $\mathscr{O}_{X}/\mathcal{I}(\varphi)$.
Suppose $L$ is not a trivial line bundle. Then:
\begin{itemize}
\item
The set
\[A=\Big\{[s]\in P(W) \big| ~s^{-1}(0)\supset Y_{i} ~\text{for some}~i\in I\Big\}\]
is a countable union of proper analytic subsets of $P(W)$.
If, moreover, $X$ is compact, then $A$ is analytic in $P(W)$.
\item
Suppose $[s]\in P(W)$ and $S=s^{-1}(0)$, then the sequence
\begin{equation*}
 0\longrightarrow\mathcal{I}(\varphi)\otimes\mathscr{O}(-S)\longrightarrow\mathcal{I}(\varphi)\longrightarrow\mathcal{I}(\varphi)|_{S}\longrightarrow 0
\end{equation*}
is exact if and only if $[s]\notin A$.
\end{itemize}
\end{thm}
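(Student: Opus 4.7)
The plan is to prove the two bullets separately. The first is a linear-algebra plus coherence argument in the spirit of the previous Bertini-type theorems, while the second reduces to a short commutative-algebra computation linking exactness of the proposed sequence to a non-zero-divisor property of $s$ on $\mathscr{O}_X/\mathcal{I}(\varphi)$.

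For the first bullet, I will write $A=\bigcup_{i\in I}A_i$ where $A_i=\{[s]\in P(W):\, s|_{Y_i}\equiv 0\}$. The restriction map $W\to H^0(Y_i,L|_{Y_i})$ is $\mathbb{C}$-linear, so its kernel $W_i$ is a linear subspace of $W$ and $A_i=P(W_i)$ is a projective linear subspace of $P(W)$, hence analytic. The hypothesis that $W$ generates every fiber of $L$ guarantees that for any $x\in Y_i$ some $s\in W$ has $s(x)\neq 0$, so $W_i\subsetneq W$ and $A_i$ is proper. Since $\mathscr{O}_X/\mathcal{I}(\varphi)$ is coherent, its family of associated subvarieties is locally finite in $X$; combined with second countability of $X$ this forces $I$ to be at most countable, and in the compact case finite, whence $A$ itself is analytic.

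For the second bullet, I interpret $\mathcal{I}(\varphi)|_S$ as the image of the restriction morphism $\mathcal{I}(\varphi)\to\mathscr{O}_S$, consistent with its role in Theorem \ref{thm: MB} where it sits above $\mathcal{I}(\varphi|_S)$ inside $\mathscr{O}_S$. With this convention the right-hand map of the sequence is surjective by definition. Locally trivializing $L$ so that $s$ becomes a defining function of $S$, the left-hand map becomes multiplication by $s$, which is automatically injective because $\mathscr{O}_X$ is torsion-free. Thus the only nontrivial condition is exactness in the middle, which unwinds to the equality of subsheaves of $\mathscr{O}_X$,
\[
s\cdot\mathcal{I}(\varphi)\;=\;\mathcal{I}(\varphi)\cap s\mathscr{O}_X,
\]
i.e., the statement that $s$ is a non-zero-divisor on the coherent sheaf $\mathscr{O}_X/\mathcal{I}(\varphi)$.

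The final step is a primary-decomposition translation. At a point $x\in X$, $s_x$ is a zero-divisor on $(\mathscr{O}_X/\mathcal{I}(\varphi))_x$ if and only if $s_x$ lies in some associated prime of that stalk, equivalently $s$ vanishes identically on some associated subvariety $Y_i$ passing through $x$. Globalizing, $s$ is everywhere a non-zero-divisor on $\mathscr{O}_X/\mathcal{I}(\varphi)$ precisely when $S=s^{-1}(0)$ contains no $Y_i$, that is $[s]\notin A$. The main obstacle, I expect, is less the commutative algebra itself than carefully pinning down the correct meaning of $\mathcal{I}(\varphi)|_S$ and matching the three parallel descriptions—sheaf-theoretic exactness in the middle, the displayed ideal equality, and the geometric non-vanishing condition defining $A$—throughout the argument.
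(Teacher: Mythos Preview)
Your argument is correct and matches the paper's proof in all essentials: for the first bullet the paper writes each $A_i$ as $\bigcap_{y\in Y_i}P(E_y)$ via the incidence diagram, which is exactly your projectivized kernel of the restriction $W\to H^0(Y_i,L|_{Y_i})$, and the countability/finiteness of $I$ is handled identically. For the second bullet the paper passes through $\operatorname{Tor}^1(\mathscr{O}_X/\mathcal{I}(\varphi),\mathscr{O}_S)$ to reduce exactness to injectivity of $\otimes s$ on $\mathscr{O}_X/\mathcal{I}(\varphi)$, while you reach the same non-zero-divisor criterion by the more elementary ideal identity $s\cdot\mathcal{I}(\varphi)=\mathcal{I}(\varphi)\cap s\mathscr{O}_X$; both then finish with the associated-primes characterization of zero-divisors, which is precisely the content of the paper's Lemma~\ref{lem: inj}.
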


\begin{cor}
Let $\mathfrak{d}$ be a base point free linear system on a complex manifold $X$ with $1\leqslant\dim\mathfrak{d}<+\infty$, and let $\phi$ be a quasi-plurisubharmonic function on $X$. 
Then there exists a measure zero set $\mathfrak{n}\subset \mathfrak{d}$ such that for each $S\in\mathfrak{d}\setminus\mathfrak{n}$ the divisor $S$ is smooth, $\mathcal{I}(\varphi|_S)=\mathcal{I}(\varphi)|_{S}$ and the sequence
\begin{equation}
 0\longrightarrow\mathcal{I}(\varphi)\otimes\mathscr{O}(-S)\longrightarrow\mathcal{I}(\varphi)\longrightarrow\mathcal{I}(\varphi|_{S})\longrightarrow 0
\end{equation}
is exact.
\end{cor}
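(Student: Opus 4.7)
My plan is to superimpose the exceptional sets produced by Corollary~\ref{cor: thin} and by the preceding Theorem on exact sequences. First I would realize $\mathfrak{d}$ as $P(W)$, where $W\subset H^{0}(X,L)$ is the finite-dimensional subspace defining the linear system and $L$ is the associated line bundle; base-point-freeness of $\mathfrak{d}$ says $W$ generates each fibre $L_{x}$, and the fact that every $S\in\mathfrak{d}$ is an honest effective divisor says $W$ contains no non-vanishing section. Both hypotheses needed to invoke the earlier results are therefore in place.

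Next I would apply Corollary~\ref{cor: thin} to obtain a measure zero set $\mathfrak{n}_{1}=\mathfrak{d}\setminus\mathfrak{b}$ outside of which $S$ is smooth and $\mathcal{I}(\varphi)|_{S}=\mathcal{I}(\varphi|_{S})$, and the preceding Theorem to obtain a countable union $\mathfrak{n}_{2}:=A$ of proper analytic subsets of $P(W)$, hence of Lebesgue measure zero, such that for each $S\in\mathfrak{d}\setminus\mathfrak{n}_{2}$ the sequence
\[
0\longrightarrow\mathcal{I}(\varphi)\otimes\mathscr{O}(-S)\longrightarrow\mathcal{I}(\varphi)\longrightarrow\mathcal{I}(\varphi)|_{S}\longrightarrow 0
\]
is exact. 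Finally I would set $\mathfrak{n}:=\mathfrak{n}_{1}\cup\mathfrak{n}_{2}$, which is again of measure zero; for each $S\in\mathfrak{d}\setminus\mathfrak{n}$ the identification $\mathcal{I}(\varphi)|_{S}=\mathcal{I}(\varphi|_{S})$ allows one to substitute $\mathcal{I}(\varphi|_{S})$ for the rightmost term, yielding the claimed exact sequence.

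The main obstacle is a small technical mismatch: the preceding Theorem assumes that $L$ is not trivial, whereas an arbitrary linear system $\mathfrak{d}$ need not have this property. When $L$ is nontrivial, the argument above goes through verbatim. When $L\cong\mathscr{O}_{X}$, each $S\in\mathfrak{d}$ is locally cut out by a nonzero holomorphic function $s$, and multiplication by $s$ is injective on the torsion-free sheaf $\mathcal{I}(\varphi)$; the exactness of the sequence for $[s]\notin A$ must then be checked by a direct local computation in order to carry the middle step through. Apart from this bookkeeping, the proof is essentially just a union of two measure zero sets followed by the sheaf identification from Corollary~\ref{cor: thin}.
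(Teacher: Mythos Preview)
Your proposal is correct and is exactly the argument the paper has in mind: the corollary is stated without proof, as it follows immediately by taking $\mathfrak{n}$ to be the union of the measure-zero exceptional set from Corollary~\ref{cor: thin} (equivalently Theorem~\ref{thm: Res}) and the set $A$ from the preceding exact-sequence theorem, then using $\mathcal{I}(\varphi)|_{S}=\mathcal{I}(\varphi|_{S})$ to rewrite the right-hand term. Your caution about the hypothesis ``$L$ not trivial'' is in fact more careful than the paper; note that the proof of Lemma~\ref{lem: gen} only uses that $W$ generates every fibre (so each $\bigcap_{y\in Y_i}P(E_y)$ is a \emph{proper} linear subspace) and $\dim P(W)\geqslant 1$, hence $A$ has measure zero regardless, and the remaining case of a nowhere-vanishing $s$ gives $S=\varnothing$, for which all three assertions are vacuous.
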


The above corollary can also be obtained by the approach of Cao (\cite{Cao14}).
Following the arguments of Cao, we can show the following result.

\begin{thm}
Let $\varphi$ be a quasi-psh function on a complex manifold $X$.
Suppose $\sigma_{0}$ is a positive continuous function on $X$
such that $\mathcal{I}(\varphi)=\mathcal{I}\left((1+\sigma_{0})\varphi\right)$.
Let $L$ be a holomorphic line bundle over $X$. 
Let $W$ be a finite dimensional subspace of $H^{0}\left(X, L\right)$ such that $W$ generates all fibers $L_{x}, ~x\in X$. 
Then there exists a measure zero set $N\subset P(W)$ such that for each $[s]\in P(W)\setminus Z$ the multiplier ideal sheaf $\mathcal{I}(\varphi)$ can be written as
\begin{equation}\label{eq:adj}
\mathcal{I}(\varphi)_{x}=\left\{f\in\mathscr{O}_{X, x}; ~\exists ~U_{x} ~\text{such that} ~\int_{U_{x}}\frac{|f|^{2}}{|s|^{2(1-\varepsilon)}}e^{-2(1+\sigma)\varphi}\mathrm{d}V<+\infty\right\},
\end{equation}
for $0<\sigma\leqslant\sigma_{0}$ and $0<\varepsilon<\sigma$, where $dV$ is a smooth volume form on $X$,
\end{thm}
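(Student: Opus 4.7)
The plan is to prove the two inclusions in \eqref{eq:adj} separately, with the easy direction following directly from local boundedness of $|s|$, and the hard direction being a Fubini-type argument over $P(W)\times X$ in the spirit of the proof already used for Theorem~\ref{thm: MB}. Throughout I fix a smooth probability measure $d\mu$ on $P(W)$ and a smooth hermitian metric on $L$.

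For the inclusion ``$\supseteq$'', the monotonicity of $\mathcal{I}(c\varphi)$ in $c\geq 0$ together with the hypothesis $\mathcal{I}(\varphi)=\mathcal{I}((1+\sigma_{0})\varphi)$ forces $\mathcal{I}((1+\sigma)\varphi)=\mathcal{I}(\varphi)$ for every admissible $\sigma$, so finiteness of the RHS integral on some $U_{x}$ immediately yields $f\in\mathcal{I}(\varphi)_{x}$ after absorbing the locally bounded factor $|s|^{2(1-\varepsilon)}$. For the other direction the key auxiliary fact is that, for each $\varepsilon>0$, the Fubini kernel
\[
K_{\varepsilon}(y)\;:=\;\int_{P(W)}\frac{d\mu([s])}{|s(y)|^{2(1-\varepsilon)}}
\]
is locally bounded on $X$: since $W$ generates $L_{y}$, the evaluation $s\mapsto s(y)$ is a surjective linear form, its vanishing locus is a projective hyperplane in $P(W)$, and the $\eta$-tube around this hyperplane has $d\mu$-measure $O(\eta^{2})$, so $|s(y)|^{-2(1-\varepsilon)}$ is $d\mu$-integrable for every $\varepsilon>0$ with a bound depending continuously on $y$.

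Using coherence of $\mathcal{I}(\varphi)$ I cover $X$ by countably many relatively compact opens $V_{i}\Subset V_{i}'$ on each of which $\mathcal{I}(\varphi)$ is generated by finitely many sections $f_{i,1},\ldots,f_{i,k_{i}}$. For each index $(i,j)$ and each pair of rationals $(\sigma,\varepsilon)$ in the admissible range, Fubini applied to
\[
\int_{P(W)}d\mu([s])\int_{V_{i}}\frac{|f_{i,j}|^{2}}{|s|^{2(1-\varepsilon)}}e^{-2(1+\sigma)\varphi}\,dV\;=\;\int_{V_{i}}|f_{i,j}|^{2}K_{\varepsilon}(y)e^{-2(1+\sigma)\varphi}\,dV\;<\;+\infty
\]
shows the inner integral is finite for $[s]$ outside a measure zero set $N_{i,j,\sigma,\varepsilon}\subset P(W)$. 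Let $N$ be the countable union of all such sets. For any $[s]\notin N$ and any $f\in\mathcal{I}(\varphi)_{x}$, write $f=\sum_{j}g_{j}f_{i,j}$ near $x$ with the $g_{j}$ bounded on some $U_{x}\subset V_{i}$; this gives the desired finiteness at rational $(\sigma,\varepsilon)$, and arbitrary admissible $(\sigma,\varepsilon)$ follow by a pointwise monotonicity argument, the integrand at $(\sigma,\varepsilon)$ being dominated, up to a smooth bounded factor, by that at a rational $(\sigma',\varepsilon')$ with $\sigma'\geq\sigma$ and $\varepsilon'\leq\varepsilon$ still admissible.

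I expect the only real difficulty to be the kernel estimate $K_{\varepsilon}<\infty$ locally uniformly in $y$; the strict positivity of $\varepsilon$ enters in exactly that one place, and without it the whole scheme collapses. Everything else is routine bookkeeping with coherence, Fubini, and monotone parameter limits.
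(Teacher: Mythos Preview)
Your argument is correct and follows the same Fubini strategy as the paper: integrate the weighted $L^{2}$ norm over the parameter space, swap the order, and use finiteness of the resulting kernel $K_{\varepsilon}(y)$ to conclude that for almost every $[s]$ the inner integral is finite. The paper realizes the kernel bound differently: it integrates over the unit sphere in $W$ and, by a unitary change of variables, shows that $\int_{\sum|\tau_{j}|^{2}=1}\bigl|\sum_{j}\tau_{j}s_{j}(y)\bigr|^{-2(1-\varepsilon)}\,d\tau$ equals $\bigl(\sum_{j}|s_{j}(y)|^{2}\bigr)^{-(1-\varepsilon)}$ times a constant independent of $y$, so no tube estimate is needed and the local boundedness of $K_{\varepsilon}$ is immediate from the fact that $W$ has no base points. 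Your hyperplane-tube argument reaches the same conclusion by a slightly longer but equally valid route. Conversely, your write-up is more complete on the bookkeeping side: the paper's proof fixes a single germ $f$, a single point $x$, and a single pair $(\sigma,\varepsilon)$, obtaining an exceptional set depending on all of these, and does not spell out the coherence/countable-cover/rational-parameter reduction needed to produce one measure-zero set $N$ valid uniformly; you handle that carefully. One small point: the monotonicity $\mathcal{I}((1+\sigma)\varphi)\subset\mathcal{I}(\varphi)$ and the domination of the integrand at $(\sigma,\varepsilon)$ by that at $(\sigma',\varepsilon')$ both tacitly use $\varphi\leq 0$, which you should arrange locally by subtracting a smooth function.
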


The existence of $\sigma_{0}$ is guaranteed by the solution of the strong openness conjecture (cf. \cite{Guan-Zhou15a}, \cite{Guan-Zhou15b}).


\section{Bertini type theorem}

Let us recall a well-known fact.
\begin{thm}[cf. \cite{{GPR94}}]\label{thm: Sard}
Let $F: X\to Y$ be a surjective holomorphic map between complex manifolds 
and let $X_{y}=F^{-1}(y)$ be the "full" fiber over a point $y\in Y$
(i.e. $F^{-1}(y)$ is equipped with the structure sheaf coming from $\operatorname{Im}\left(F^{\ast}(\mathbb{m}_{y})\to\mathscr{O}_{X}\right)$).
Then the set \[Z=\{y\in Y; ~ X_{y} ~\text{is not smooth}\}\] has Lebesgue measure zero in $Y$, 
and the tangent map $F_{\ast}: T_{X}\to T_{Y}$ is surjective at each point of $X\setminus F^{-1}(Z)$.
If, moreover, $F$ is proper, then $Z$ is analytic in $Y$.
\end{thm}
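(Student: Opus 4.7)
The plan is to reduce the measure-zero assertion to the classical real Sard theorem and, in the proper case, to deduce the analyticity statement from Remmert's proper mapping theorem.

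First, I would introduce the critical locus in $X$,
\[
C = \bigl\{x \in X \mid dF_{x} : T_{X,x} \to T_{Y, F(x)} \text{ is not surjective}\bigr\}.
\]
In local holomorphic coordinates, $C$ is cut out by the simultaneous vanishing of all $(\dim Y)$-minors of the holomorphic Jacobian of $F$, so it is a closed analytic subset of $X$. By the holomorphic rank/implicit function theorem, at any point $x$ where $dF_{x}$ is surjective there exist local coordinates in which $F$ becomes a linear projection, and the full fiber is a complex submanifold near $x$. Conversely, if $dF_{x}$ is not surjective, the functions $F_{1}-y_{1},\ldots,F_{m}-y_{m}$ generating the ideal of $X_{y}$ at $x$ have linearly dependent differentials, and the scheme-theoretic fiber fails to be smooth there. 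Hence $Z = F(C)$, and the asserted surjectivity of $F_{\ast}$ on $X \setminus F^{-1}(Z)$ is automatic.

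Second, for the measure-zero assertion I would view $F$ as a smooth map of real manifolds. Because $F$ is holomorphic, its real differential has real rank equal to twice its complex rank; consequently $C$ coincides with the set of real critical points of $F$. The classical Sard theorem in the $C^{\infty}$ category then yields that $F(C) = Z$ has Lebesgue measure zero in $Y$. When $F$ is proper, the restriction $F|_{C} : C \to Y$ is proper as well (since $C$ is closed in $X$), and Remmert's proper mapping theorem upgrades $F(C) = Z$ to an analytic subset of $Y$.

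The main technical subtlety — and the step I would handle most carefully — is the correct interpretation of smoothness of the full (scheme-theoretic) fiber. Since the structure sheaf of $X_{y}$ is $\mathscr{O}_{X}/F^{\ast}(\mathfrak{m}_{y})\mathscr{O}_{X}$, smoothness at $x$ means that the ideal $F^{\ast}(\mathfrak{m}_{y})\mathscr{O}_{X,x}$ is generated by elements with linearly independent differentials at $x$, which is precisely the condition that $dF_{x}$ is surjective. Once this translation between the infinitesimal and scheme-theoretic descriptions is in place, the remainder of the argument is a routine application of Sard's and Remmert's theorems.
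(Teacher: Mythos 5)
The paper offers no proof of this statement---it is recalled from \cite{GPR94}---so your argument can only be judged on its own terms; the reduction to the real Sard theorem plus Remmert's proper mapping theorem is indeed the standard route. The direction you prove correctly (if $dF_{x}$ is surjective at every point of $X_{y}$, then the full fiber is a reduced smooth submanifold near each of its points) gives $Z\subseteq F(C)$, and Sard then yields the measure-zero assertion. The genuine gap is the converse you assert, namely that $dF_{x}$ not surjective forces the scheme-theoretic fiber to be non-smooth at $x$, i.e.\ the identity $Z=F(C)$. The fact that the particular generators $F_{1}-y_{1},\ldots,F_{m}-y_{m}$ of the ideal have linearly dependent differentials at $x$ does not mean the ideal fails to be that of a smooth germ: the same ideal may be generated by fewer elements with independent differentials, and the fiber may be smooth of dimension larger than $\dim X-\dim Y$. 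Concretely, let $F\colon X\to\mathbb{C}^{2}$ be the blow-up of the origin (surjective and proper); in the chart $(u,v)\mapsto(u,uv)$ the pulled-back ideal of $\mathfrak{m}_{0}$ is $(u,uv)=(u)$, so the full fiber over $0$ is the reduced exceptional curve $E\cong\mathbb{P}^{1}$, a smooth complex space, while every point of $E$ is a critical point of $F$. So $Z=F(C)$ is false in general, and the ``translation'' you single out as the main technical subtlety is exactly the step that breaks.

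This affects the other two clauses. With $Z$ literally defined as the set of $y$ with $X_{y}$ not smooth, your argument does not establish that $F_{\ast}$ is surjective on $X\setminus F^{-1}(Z)$, and the blow-up shows that this clause is only correct if one takes $Z=F(C)$ (equivalently, reads the theorem as: one can choose a measure-zero, resp.\ analytic, set $Z$ outside of which the fibers are smooth and $F$ is a submersion---which is precisely the form in which the statement is invoked in the proofs of Theorems \ref{thm: Ber} and \ref{thm: Res}). Likewise, in the proper case Remmert gives analyticity of $F(C)$, not of the literal $Z$; if $Z\subsetneq F(C)$ one needs a further argument, e.g.\ that the relative singular locus $\Sigma=\{x\in X:\ X_{F(x)} \text{ is not smooth at } x\}$ is analytic in $X$ (via the Fitting ideals of the relative differentials $\Omega_{X/Y}$, say), after which $Z=F(\Sigma)$ is analytic by Remmert. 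The simplest repair is to set $Z:=F(C)$ from the start: then everything you wrote goes through and yields exactly the version of the theorem that the rest of the paper uses.
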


\begin{thm}\label{thm: Ber}
Let $F$ be a holomorphic vector bundle over a complex manifold $X$ and let $W$ be a finite dimensional subspace of $H^{0}\left(X, F\right)$ such that $W$ generates all fibers $F_{x}, ~x\in X$.
Let $P(W)$ denote the projective space of $W$.
If $W$ has no non-vanishing sections, then $\dim X\geqslant\operatorname{rank}F$ and the set
\[Z=\Big\{[s]\in P(W) \big| ~s^{-1}(0) ~\text{is not smooth}\Big\}\]
has Lebesgue measure zero in $P(W)$. 
If, moreover, $X$ is compact, then $Z$ is analytic in $P(W)$.
\end{thm}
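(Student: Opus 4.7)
The plan is to reduce Theorem~\ref{thm: Ber} to the Sard-type theorem (Theorem~\ref{thm: Sard}) by building a smooth incidence variety over $P(W)$ whose full fibers are the zero loci $s^{-1}(0)$.

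First I would construct this incidence variety. Since $W$ generates every fiber $F_{x}$, the evaluation map $\operatorname{ev}\colon W\otimes\mathscr{O}_{X}\to F$ is a surjective holomorphic bundle map on $X$, so $E:=\ker(\operatorname{ev})$ is a holomorphic subbundle of $W\otimes\mathscr{O}_{X}$ of rank $\dim W-\operatorname{rank}F$. The natural inclusion $P(E)\hookrightarrow P(W\otimes\mathscr{O}_{X})=X\times P(W)$ identifies $P(E)$ with the incidence set
\[
Y=\bigl\{(x,[s])\in X\times P(W)\,\big|\,s(x)=0\bigr\},
\]
which is therefore a complex manifold of dimension $\dim X+\dim W-\operatorname{rank}F-1$. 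Let $\pi\colon Y\to P(W)$ denote the second projection.

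Next I would match the full fibers of $\pi$ with the zero schemes $s^{-1}(0)$. Locally, extend $s$ to a basis $s_{0}=s,s_{1},\dots,s_{k}$ of $W$, giving affine coordinates $a=(a_{1},\dots,a_{k})$ on a chart of $P(W)$ around $[s]$ via $a\mapsto[s+\sum a_{i}s_{i}]$; after trivializing $F$, the variety $Y$ is cut out in $X\times P(W)$ by the components of the section $\sigma(x,a)=s(x)+\sum_{i}a_{i}s_{i}(x)$ of $F$. Imposing the maximal ideal $\mathfrak{m}_{[s]}=(a_{1},\dots,a_{k})$ of $[s]$ collapses these to the components of $s(x)$, so $\pi^{-1}([s])$ (with its full-fiber structure) coincides with $s^{-1}(0)$ as an analytic subspace of $X$. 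The hypothesis that $W$ has no non-vanishing section is exactly the statement that $\pi$ is surjective, and then comparing dimensions $\dim Y\geqslant\dim P(W)$ yields $\dim X\geqslant\operatorname{rank}F$ at once.

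Finally, Theorem~\ref{thm: Sard} applied to the surjective holomorphic map $\pi\colon Y\to P(W)$ between complex manifolds shows that the set of $[s]$ whose full fiber fails to be smooth---equivalently, the set $Z$ of the theorem---has Lebesgue measure zero. If $X$ is compact, then $Y\subset X\times P(W)$ is compact, so $\pi$ is proper and Theorem~\ref{thm: Sard} further gives that $Z$ is analytic in $P(W)$. The main delicate point I anticipate is the scheme-theoretic identification $\pi^{-1}([s])=s^{-1}(0)$: the representative $s$ of the class $[s]$ is defined only up to scaling, so one has to verify that the induced ideal on $X$ is representative-independent, which the explicit local model above makes transparent.
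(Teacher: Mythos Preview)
Your proposal is correct and follows essentially the same approach as the paper: both construct the incidence variety $P(E)\subset X\times P(W)$ from the kernel $E$ of the evaluation map, identify the fibers of the projection to $P(W)$ with the zero loci $s^{-1}(0)$, use the ``no non-vanishing section'' hypothesis to obtain surjectivity (hence the dimension inequality), and conclude via Theorem~\ref{thm: Sard}. Your treatment is in fact slightly more explicit than the paper's on the scheme-theoretic identification of the full fiber $\pi^{-1}([s])$ with $s^{-1}(0)$, which the paper only sketches; the only small point you leave implicit is that $\operatorname{rank}E\geqslant 1$ (equivalently $\dim W>\operatorname{rank}F$), but this follows immediately since otherwise $Y=\varnothing$ and $\pi$ could not be surjective.
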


\begin{proof}
We denote by $r$ the rank of $F$ and by $N$ the dimension of $W$.
We first note that $\dim W\geqslant r$ since $F$ is generated by sections in $W$.
However, if $\dim W=r$, then one can find a basis $s_{1}, \cdots, s_{r}\in W$ such that $F$ is generated by these $r$ sections. 
In this case $s_{1}(x), \cdots, s_{r}(x)\in F_{x}$ are linearly independent at each point $x\in X$ and hence $s_{1}, \cdots, s_{r}$ are non-vanishing sections in $W$. 
Therefore, we may assume that $N\geqslant r+1$.

Let $\underline{W}=X\times W$ be the trivial vector bundle of rank $N$ over $X$.
Since $F$ is generated by sections in $W$, we have a surjective bundle morphism
\begin{equation*}
\underline{W}\xlongrightarrow{\Phi} F\longrightarrow 0.
\end{equation*}
Here, the bundle morphism $\Phi$ is given by the evaluation map
\begin{equation*}
\left(x, s\right)\mapsto s(x), \quad x\in X, ~s\in W.
\end{equation*}
Then we have the following exact sequence of vector bundles
\[0\to E\to \underline{W}\to F\to 0,\]
where $E$ is a holomorphic subbundle of $\underline{W}$ and the fibers of $E$ are 
\[E_{x}=\left\{s\in W ; ~ s(x)=0 \right\}, ~x\in X.\]
Let us consider the projectivized bundles \[P\left(\underline{W}\right)=X\times P(W).\] 
The points of $P\left(\underline{W}\right)$ can be identified with the lines in the fibers of $\underline{W}$.
The elements of $P\left(\underline{W}\right)$ can be written as
\[\left(x, [s]\right)\in P\left(\underline{W}\right), \quad x\in X, ~s\in W.\]
If we define
\[P(E)=\coprod_{x\in X}P(E_{x}),\]
then \[\pi: P(E)\longrightarrow X\] is the projectivized bundle of $E$. 
Since $E$ is a subbundle of $\underline{W}$, we have
\[P(E)\subset P\left(\underline{W}\right)\]
and the points of $P(E)$ can be represented by
\[\Big\{\left(x, [s]\right)\in P\left(\underline{W}\right); \quad s\in W, ~s(x)=0.\Big\}\]
The natural projection 
\[\mu: P\left(\underline{W}\right)=X\times P(W)\to P(W), \quad \left(x, [s]\right)\mapsto [s]\]
induces a holomorphic map
\[\mu: P(E)\longrightarrow P(W).\]
Thus we obtain the following diagram
\begin{equation}
\begin{CD}
P(E) @>>{\mu}> P(W) \\
@VV\pi V \\
X 
\end{CD}
\end{equation}

We claim that $\mu$ is surjective.
In fact, for any $s\in W$, we can find a point $x_{0}\in X$ such that $s(x_{0})=0$ 
since $W$ has no non-vanishing sections.
Therefore, for any point $[s]\in P(W)$, we can find a point $\left(x_{0}, [s]\right)\in P(E)$ such that $\mu\left(x_{0}, [s]\right)=[s]$.
The surjectivity of $\mu$ yields $\dim P(E)\geqslant P(W)$.
The dimensions of $P(E)$ and $P(W)$ are $\dim X+N-r-1$ and $N-1$ respectively.
So we can conclude that $\dim X\geqslant r$.

We next consider the fibers of $\mu$.
If $[s]\in P(W)$, then the fiber
\[\mu^{-1}\left([s]\right)=\left\{\left(x, [s]\right)\in X\times P(W);~s(x)=0\right\},\]
and it is isomorphic to a subvariety $S=\left\{x\in X; ~s(x)=0\right\}$ of $X$.
Here, the isomorphism is given by the projection $\pi: P(E)\to X$.
In what follows we do not distinguish the fiber $\mu^{-1}\left([s]\right)$ and the subvariety $S$.

If $X$ is compact, then the map $\mu: P(E)\to P(W)$ is proper.
By Theorem \ref{thm: Sard}, one can find a proper analytic subset $Z\subset P(W)$ such that
\[\mu: P(E)\setminus\mu^{-1}(Z)\longrightarrow P(W)\setminus Z\] 
is a submersion and the fibers $\mu^{-1}\left([s]\right)\cong s^{-1}(0)$, ~$[s]\in P(W)\setminus Z$ are smooth.
The proper mapping theorem implies $Z$ is an analytic subset of $P(W)$ in case $\mu$ is proper.

In the general case, the holomorphic map $\mu: P(E)\to P(W)$ may not be proper.
Let $C$ denote the critical set of $\mu$, which is the set of points in $X$ at which the differential $\mu_{\ast}: T_{P(E)}\to T_{P(W)}$ of $\mu$ is not surjective.
It is obvious that $C$ is an analytic subset of $P(E)$.
By Sard's theorem, the set $\mu(C)$ has measure zero in $P(W)$.
Moreover, one can show that $\mu(C)$ is a countable union of nowhere dense closed subsets of $P(W)$.
Let $Z=\mu(C)$. Then $Z$ has measure zero in $P(W)$ and $s^{-1}(0)\subset X$ is smooth for $[s]\in P(W)\setminus Z$.
 In general, $Z$ is not an analytic subset. 
\end{proof}

Given a globally generated holomorphic vector bundle $F$, one can always find a finite dimensional subspace $W\subset H^{0}\left(X, F\right)$ to generate all fibers of $F$. This fact is shown by the following theorem.

\begin{thm}[cf. \cite{Dembook}]
Let $F$ be a holomorphic vector bundle of rank $r$ over a complex manifold $X$.
If $F$ is globally generated, then there exists a finite dimensional subspace $W\subset H^{0}\left(X, F\right)$, $\dim W\leqslant \dim X +r$, such that W generates all fibers $F_{x}$, ~$x\in X$.
\end{thm}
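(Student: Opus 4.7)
The plan is to argue in two stages. \emph{Stage one:} secure any finite-dimensional subspace $W_0 \subset H^0(X,F)$ that globally generates $F$. \emph{Stage two:} shrink $W_0$ by a codimension-one descent until its dimension drops to $n + r$, where $n = \dim X$.

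For stage one, if $X$ is compact then $H^{0}(X,F)$ is itself finite-dimensional by coherence, so one may just take $W_0 = H^{0}(X,F)$. For a general second countable $X$, one produces such a $W_0$ by choosing, at each point $x$, an $r$-tuple of sections in $H^{0}(X,F)$ trivializing $F$ on a neighborhood of $x$, extracting a countable subcover, and then patching finitely many sections out of this data by a Cartan--Oka type approximation argument. I would treat this as standard and not dwell on it.

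For stage two, set $N = \dim W_0$ and suppose $N > n + r$. For every $x \in X$ the evaluation $\operatorname{ev}_x : W_0 \to F_x$ is surjective, so its kernel $K_x \subset W_0$ has codimension $r$. A hyperplane $H = \ker\ell$ (with $[\ell] \in P(W_0^*)$) generates $F_x$ if and only if $K_x \not\subset H$, equivalently $\ell \notin K_x^{\perp}$, where $K_x^{\perp} \subset W_0^{*}$ is the $r$-dimensional annihilator. I then form the incidence variety
\[
B \;=\; \bigl\{\,(x,[\ell]) \in X \times P(W_0^{*}) \;:\; \ell \in K_x^{\perp}\,\bigr\},
\]
which is the projectivization of the rank-$r$ holomorphic subbundle $K^{\perp} \subset X \times W_0^{*}$ (in fact isomorphic as a bundle to $F^{*}$). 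Thus $B$ is a $\mathbb{P}^{r-1}$-bundle over $X$, smooth of complex dimension $n + r - 1$.

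The final step analyzes the second projection $\pi : B \to P(W_0^{*})$, whose image is exactly the locus of hyperplanes of $W_0$ that fail to globally generate $F$. Since $\dim B = n + r - 1 < N - 1 = \dim P(W_0^{*})$, Sard's theorem applied slice-by-slice along a compact exhaustion $X = \bigcup_j K_j$ shows that $\pi(B)$ is a countable union of sets of Lebesgue measure zero, so in particular $\pi(B) \subsetneq P(W_0^{*})$. I pick any $[\ell_0] \notin \pi(B)$ and set $W_1 = \ker \ell_0$; this is a codimension-one subspace of $W_0$ that still globally generates $F$. Iterating the construction $N - n - r$ times produces the desired $W$ of dimension exactly $n + r$. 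The one subtle point I anticipate is stage one in the non-compact setting; stage two is essentially dual to, and runs along the same lines as, the Bertini-type argument already carried out in the proof of Theorem~\ref{thm: Ber}.
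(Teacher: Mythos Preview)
The paper does not prove this statement; it is quoted from Demailly's book without proof, so there is no in-paper argument to compare against. I will therefore assess correctness.

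Your stage two is correct and is the essential idea. The genuine gap is stage one in the non-compact case. A countable subcover produces countably many global sections, and there is no ``Cartan--Oka type approximation'' on a general complex manifold that collapses this to a finite list: Cartan's theorems live on Stein manifolds and in any case do not yield finite global generation of this sort. The existence of \emph{some} finite-dimensional generating $W_0$ is already the qualitative content of the theorem, so it cannot be waved away as ``standard''; you have in effect assumed what is to be proved and then sharpened the bound.

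The fix---and the argument one finds in Demailly---is to run your dimension count in the \emph{ascending} direction, working inside $P(F^*)$ rather than inside $P(W_0^*)$. Set
\[
Z_k \;=\; \bigl\{(x,[\lambda])\in P(F^*)\ :\ \lambda(s_i(x))=0,\ 1\le i\le k\bigr\},
\]
so that $\dim Z_0 = n+r-1$. For each irreducible component $Y$ of $Z_k$ the ``bad'' choices of $s_{k+1}$ form the proper linear subspace $\bigcap_{\xi\in Y}H_\xi$ of $H^0(X,F)$, where $H_\xi=\{s:\lambda(s(x))=0\}$. Since $Z_k$ has at most countably many components and a complex vector space is never a countable union of proper subspaces, one may choose $s_{k+1}$ avoiding them all, forcing $\dim Z_{k+1}<\dim Z_k$. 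After $n+r$ steps $Z_{n+r}=\emptyset$, i.e.\ $s_1,\dots,s_{n+r}$ generate $F$. This is the same incidence picture you drew (indeed $B\cong P(F^*)$), but it never needs a finite-dimensional ambient space to start from.
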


\begin{rmk}
If $F$ is an ample vector bundle over a complex manifold $X$.
Suppose $F$ can be generated by sections in $W\subset H^{0}(X, F)$.
Then $W$ has no non-vanishing section if and only if $F$ has no trivial subbundles.
In fact, if $F$ has a trivial subbundle $F_{1}$, then we have an exact sequence of vector bundles
\[0\to F_{1}\to F\to Q\to 0.\]
Then $F\cong F_{1}\oplus Q$ by the vanishing theorem of Le Potier (\cite{LeP75}).
In this case $W$ has non-vanishing sections because $F$ can be generated by sections in $W$.
\end{rmk}

\begin{rmk}
Let $F$ be a globally generated homomorphic vector bundle over a complex manifold $X$.
If $\operatorname{rank}F>\dim X$, then $F$ has a trivial subbundle of rank $(\operatorname{rank}F-\dim X)$.
This is a result due to Serre. For a proof, we may refer to \cite{OSS11}.
\end{rmk}


\section{Restriction formula on multiplier ideal sheaves}

\begin{thm}\label{thm: Res}
Let $\varphi$ be a quasi-psh function on a complex manifold $X$.
Let $F$ be a holomorphic vector bundle over $X$ and let $W$ be a finite dimensional subspace of $H^{0}\left(X, F\right)$ such that $W$ generates all fibers $F_{x}, ~x\in X$.
Let $P(W)$ denote the projective space of $W$ and 
\begin{equation*}
B=\Big\{[s]\in P(W) \big| ~S=s^{-1}(0) ~\text{is smooth and}~\mathcal{I}(\varphi|_S)=\mathcal{I}(\varphi)|_{S}\Big\}.
\end{equation*}
If $W$ has no non-vanishing sections, then $P(W)\setminus B$ has measure zero in $P(W)$.
\end{thm}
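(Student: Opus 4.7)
The plan is to establish the inclusions $\mathcal{I}(\varphi|_S)\subset\mathcal{I}(\varphi)|_S$ and $\mathcal{I}(\varphi)|_S\subset\mathcal{I}(\varphi|_S)$ separately on a set of full measure, the first via the Ohsawa--Takegoshi $L^{2}$ extension theorem and the second via a Fubini computation on the incidence variety $P(E)$ built in the proof of Theorem~\ref{thm: Ber}. First, by Theorem~\ref{thm: Ber} I may discard the measure zero set $Z\subset P(W)$ and assume that $S=s^{-1}(0)$ is smooth. For such $[s]$, the Ohsawa--Takegoshi theorem applied in a local trivialization of $F$ to the defining section $s$ shows that every germ $g\in\mathcal{I}(\varphi|_S)_x$ extends to a germ $f\in\mathcal{I}(\varphi)_x$ with $f|_S=g$, yielding $\mathcal{I}(\varphi|_S)\subset\mathcal{I}(\varphi)|_S$. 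It therefore suffices to produce a measure zero set $N\supset Z$ such that $\mathcal{I}(\varphi)|_S\subset\mathcal{I}(\varphi|_S)$ holds for all $[s]\in P(W)\setminus N$.

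For the reverse inclusion I would use the incidence diagram of Theorem~\ref{thm: Ber}, with projections $\pi:P(E)\to X$ and $\mu:P(E)\to P(W)$. Since $X$ is second countable and $\mathcal{I}(\varphi)$ is coherent, cover $X$ by a countable family of relatively compact open sets $U_{\alpha}$ on each of which $\mathcal{I}(\varphi)$ admits finitely many generators $f_{\alpha,1},\dots,f_{\alpha,N_{\alpha}}$ with $\int_{U_{\alpha}}|f_{\alpha,j}|^{2}e^{-2\varphi}\,dV<+\infty$. Fix K\"ahler metrics on $X$ and $P(W)$ and consider the induced volume form on $P(E)\subset X\times P(W)$. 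The projection $\pi$ is a smooth holomorphic fibration with compact projective fibers $P(E_{x})$ of the same complex dimension $N-r-1$, so $\pi_{\ast}\,d\mathrm{vol}_{P(E)}$ is a smooth positive multiple of $dV$ on $X$ and
\begin{equation*}
\int_{\pi^{-1}(U_{\alpha})}|f_{\alpha,j}|^{2}e^{-2\varphi}\,d\mathrm{vol}_{P(E)}\leqslant C_{\alpha}\int_{U_{\alpha}}|f_{\alpha,j}|^{2}e^{-2\varphi}\,dV<+\infty.
\end{equation*}

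The same integral can also be evaluated along the fibers of $\mu$. By Theorem~\ref{thm: Sard}, the critical set $C\subset P(E)$ of $\mu$ projects to a measure zero subset $\mu(C)=Z$ of $P(W)$, and on $P(E)\setminus C$ the map $\mu$ is a holomorphic submersion whose Jacobian is locally bounded below by a positive constant on compact subsets. For $[s]\notin Z$, the fiber $\mu^{-1}([s])$ is identified via $\pi$ with the smooth submanifold $S\cap U_{\alpha}$, and the product K\"ahler metric restricts there to the usual volume form $dV_S$; the coarea formula combined with the previous bound then gives
\begin{equation*}
\int_{P(W)\setminus Z}\left(\int_{S\cap U_{\alpha}}|f_{\alpha,j}|^{2}e^{-2\varphi|_S}\,dV_S\right)d\mathrm{vol}_{P(W)}([s])<+\infty
\end{equation*}
after absorbing the positive Jacobian factor. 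Consequently the inner integral is finite for $[s]$ outside a measure zero set $N_{\alpha,j}\subset P(W)$, which is precisely the statement that the restriction $f_{\alpha,j}|_S$ lies in $\mathcal{I}(\varphi|_S)_x$ at every $x\in S\cap U_{\alpha}$. Setting $N=Z\cup\bigcup_{\alpha,j}N_{\alpha,j}$, a countable union of null sets, gives the desired measure zero subset of $P(W)$: for $[s]\in P(W)\setminus N$ every generator of $\mathcal{I}(\varphi)|_S$ on every chart belongs to $\mathcal{I}(\varphi|_S)$, so the reverse inclusion holds.

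The main technical point I anticipate is the simultaneous control of the two Jacobians from the $\pi$- and $\mu$-Fubini passes. The $\pi$-Jacobian is straightforwardly bounded because $\pi$ is a holomorphic submersion with diffeomorphic compact fibers, but the $\mu$-Jacobian degenerates precisely on the critical set $C$, which is why the conclusion can only be drawn at points $[s]\notin Z$. To make the coarea identity rigorous one should localize to a compact exhaustion of $P(E)\setminus C$ on which the $\mu$-Jacobian is uniformly bounded below, derive the inequality above on each compact piece, and then pass to the limit. With this care the two Fubini passes combine to give the desired almost-everywhere equality of multiplier ideals.
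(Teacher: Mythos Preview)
Your proposal is correct and follows essentially the same route as the paper: both use the incidence diagram $X\xleftarrow{\pi}P(E)\xrightarrow{\mu}P(W)$, invoke Ohsawa--Takegoshi for the inclusion $\mathcal{I}(\varphi|_S)\subset\mathcal{I}(\varphi)|_S$, and obtain the reverse inclusion by pulling a local $L^2$ integral up along $\pi$ and then slicing along $\mu$ via Fubini, using coherence of $\mathcal{I}(\varphi)$ to reduce to countably many generators on countably many charts. The paper makes the $\mu$-slicing concrete by applying the inverse function theorem at each regular point to get a local product $U_p\cong\Omega_p\times G_p$, whereas you phrase it as a global coarea formula; these are equivalent. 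One small slip: in your last paragraph the relevant control on the $\mu$-Jacobian is an \emph{upper} bound (so that $\int |F|^2e^{-2\tilde\varphi}J\mu\,dV_{P(E)}$ is dominated by $\sup J\mu\cdot\int|F|^2e^{-2\tilde\varphi}\,dV_{P(E)}$), not a lower bound---and this upper bound is automatic on the compact set $\pi^{-1}(\overline{U_\alpha})$ since $J\mu$ is smooth, so no exhaustion of $P(E)\setminus C$ is actually needed.
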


\begin{proof}
Let us consider the diagram constructed in the proof of Theorem \ref{thm: Ber}
\begin{equation}
\begin{CD}
P(E) @>>{\mu}> P(W) \\
@VV\pi V \\
X 
\end{CD}
\end{equation}
For any smooth submanifold $S\subset X$, the Ohsawa-Takegoshi extension theorem implies that 
\[\mathcal{I}(\varphi|_S)\subset\mathcal{I}(\varphi)|_{S}.\]
For the other direction, 
let $U\subset X$ be a small open subset and suppose $f\in\mathscr{O}(U)$ and
\[\int_{U}|f|^{2}e^{-2\varphi}dV<+\infty.\]
Let us consider the bundle $\pi: \pi^{-1}(U)\to U$, where $\pi^{-1}(U)\subset P(E)$.
After shrinking $U$, we may assume $\pi^{-1}(U)\cong U\times\mathbb{P}^{N-r-1}$ is a trivial bundle over $U$.
Set \[F=f\circ\pi, \quad \widetilde{\varphi}=\varphi\circ\pi.\] 
Then $F\in\mathscr{O}\left(\pi^{-1}(U)\right)$ and $\widetilde{\varphi}$ is quasi-psh on $\pi^{-1}(U)$.
Let $dV_{FS}$ be the volume form on $\mathbb{P}^{N-r-1}$ associated to the Fubini-Study metric.
Let $dV_{\pi^{-1}(U)}$ be the smooth volume form on $\pi^{-1}(U)$ induced by $dV$ and $dV_{FS}$.
Since $F$ and $\widetilde{\varphi}$ are constant along the fibers $\mathbb{P}^{N-r-1}$,
\begin{equation}
\int_{\pi^{-1}(U)}|F|^{2}e^{-2\widetilde{\varphi}}dV_{\pi^{-1}(U)}=\int_{\mathbb{P}^{N-r-1}}dV_{FS}\cdot\int_{U}|f|^{2}e^{-2\varphi}dV<+\infty
\end{equation}
by Fubini's theorem.

We first assume that $X$ is compact. Then the map $\mu: P(E)\to P(W)$ is proper.
By Theorem \ref{thm: Sard}, one can find a proper analytic subset $Z\subset P(W)$ such that
\[\mu: P(E)\setminus\mu^{-1}(Z)\longrightarrow P(W)\setminus Z\] 
is a submersion and the fibers $\mu^{-1}\left([s]\right), [s]\in P(W)\setminus Z$ are smooth.

Let $\Omega$ be a simply connected domain in $P(W)\setminus Z$.
Then $\mu: \mu^{-1}(\Omega)\to\Omega$ is a submersion and hence $\mu^{-1}(\Omega)\subset P(E)$ is diffeomorphic to the product smooth manifold $\Omega\times S_{0}$, where $S_{0}$ is a complex submanifold of $X$ determined by an element in $\Omega$.
Let $dV_{P(W)}$ be a smooth volume form on $P(W)$ and $dV_{S}$ a smooth volume form on $S_{0}$.
These two measure induce a smooth volume form $dV_{\mu^{-1}(\Omega)}$ on $\mu^{-1}(\Omega)$.
However, by shrinking $U$ and $\Omega$ smaller, the two volume forms $dV_{\pi^{-1}(U)}$ and $dV_{\mu^{-1}(\Omega)}$ are equivalent. 
So we can conclude
\begin{equation}
\int_{\pi^{-1}(U)}|F|^{2}e^{-2\widetilde{\varphi}}dV_{\mu^{-1}(\Omega)}<+\infty.
\end{equation}
By Fubini's theorem,
\begin{equation}
\begin{split}
&\int_{[s]\in\Omega}\left(\int_{U\cap\mu^{-1}\left([s]\right)}|f|^{2}e^{-2\varphi}dV_{S}\right)dV_{P(W)} \\
=&\int_{\pi^{-1}(U)\cap\mu^{-1}(\Omega)}|F|^{2}e^{-2\widetilde{\varphi}}dV_{\mu^{-1}(\Omega)} \\
\leqslant&\int_{\pi^{-1}(U)}|F|^{2}e^{-2\widetilde{\varphi}}dV_{\mu^{-1}(\Omega)}<+\infty
\end{split}
\end{equation}
and hence the set
\[N(U, \Omega, f)=\left\{[s]\in\Omega; \quad \int_{U\cap\mu^{-1}\left([s]\right)}|f|^{2}e^{-2\varphi}dV_{S}=+\infty\right\}\] 
has measure zero in $P(W)$.
For $[s]\in\Omega\setminus N(U, \Omega, f)$, we have 
\[\int_{U\cap\mu^{-1}\left([s]\right)}|f|^{2}e^{-2\varphi}dV_{S}<+\infty\] 
and hence $f|_{S}\in\mathcal{I}(\varphi|_S)$.
After shrinking $U$, we may assume that $\mathcal{I}(\varphi)|_{U}$ is globally generated by $f_{1}, \cdots, f_{m}\in\mathscr{O}(U)$ and
\[\int_{U}|f_{j}|^{2}e^{-2\varphi}dV<+\infty, \quad 1\leqslant j\leqslant m.\]
The set $N(U, \Omega)=\cup_{j=1}^{m}N(U, \Omega, f_{j})$ has measure zero. 
If $[s]\notin N(U, \Omega)$, then $f_{j}|_{S}\in\mathcal{I}(\varphi|_S)$ for all $j$ and hence $\mathcal{I}(\varphi)|_{S\cap U}=\mathcal{I}(\varphi|_{S\cap U})$. Let 
\[N(U)=\left\{[s]\in P(W)\setminus Z; ~\mathcal{I}(\varphi)|_{S\cap U}\neq\mathcal{I}(\varphi|_{S\cap U}) \right\}.\]
Then it is easy to see that the measure of $N(U)$ is zero and hence the set
\[N=\left\{[s]\in P(W)\setminus Z; ~\mathcal{I}(\varphi)|_{S}\neq\mathcal{I}(\varphi|_{S})\right\}. \]
has measure $0$.
Therefore, the set
\[P(W)\setminus B=Z\cup N \]
has measure zero in $P(W)$.

In the general case, the holomorphic map $\mu: P(E)\to P(W)$ may not be proper.
Let $C$ denote the critical set of $\mu$, which is the set of points in $X$ at which the differential $\mu_{\ast}: T_{P(E)}\to T_{P(W)}$ of $\mu$ is not surjective.
It is obvious that $C$ is an analytic subset of $P(E)$.
By Sard's theorem, the set $\mu(C)$ has measure zero in $P(W)$.
Moreover, one can show that $\mu(C)$ is a countable union of nowhere dense closed subsets of $P(W)$.
The proper mapping theorem implies $\mu(C)$ is an analytic subset of $P(W)$ in case $\mu$ is proper.
But this is no longer true if $\mu$ is not proper.

Let $p\in\pi^{-1}(U)\setminus C$ be a regular point of $\mu$.
Then the differential of $\mu$ \[\mu_{\ast}: T_{p}P(E)\to T_{\mu(p)}P(W)\] is surjective.
By the inverse function theorem, there is an open neighborhood $U_{p}\subset \pi^{-1}(U)\setminus C$ of $p$, an open neighborhood $\Omega_{p}$ of $\mu(p)$ and a domain $G_{p}\subset\mathbb{C}^{n-r}$ such that $U_{p}$ is isomorphic to the product space $\Omega_{p}\times G_{p}$ and the map $\mu|_{U_{p}}$ is given by the natural projection 
\[\Omega_{p}\times G_{p}\to\Omega_{p}.\]
In other words, $\mu^{-1}([s])\cap U_{p}$ is isomorphic to $G_{p}$.

Suppose $dV_{P(W)}$ and $dV_{G_{p}}$ are smooth volume forms on $P(W)$ and $G_{p}$ respectively.
These two measure induce a smooth volume form $dV_{U_{p}}$ on $U_{p}$.
By shrinking $U_{p}$ smaller if necessary, the two volume forms $dV_{\pi^{-1}(U)}$ and $dV_{U_{p}}$ are equivalent on $U_{p}$. 
Then we can conclude
\begin{equation}
\int_{U_{p}}|F|^{2}e^{-2\widetilde{\varphi}}dV_{U_{p}}<+\infty.
\end{equation}
By Fubini's theorem,
\begin{equation}
\int_{\Omega_{p}}\left(\int_{G_{p}}|F|^{2}e^{-2\widetilde{\varphi}}dV_{G_{p}}\right)dV_{P(W)}
=\int_{U_{p}}|F|^{2}e^{-2\widetilde{\varphi}}dV_{U_{p}}<+\infty.
\end{equation}
Thus the set
\[N(U_{p}, f)=\left\{[s]\in P(W)\setminus\mu(C); \quad \int_{U_{p}\cap\mu^{-1}([s])}|F|^{2}e^{-2\widetilde{\varphi}}dV_{G_{p}}=+\infty\right\}\] 
has measure zero in $P(W)$.

\begin{rmk}
One can define a function $\psi: \Omega_{p}\longrightarrow [-\infty, +\infty)$ by setting
\[\psi\left([s]\right)=-\log\int_{U_{p}\cap\mu^{-1}([s])}|F|^{2}e^{-2\widetilde{\varphi}}dV_{G_{p}},\]
It is easy to see that $\psi$ is upper semi-continuous and 
\[N(U_{p}, f)\cap\Omega_{p}=\psi^{-1}(-\infty).\]
So $N(U_{p}, f)$ is a pluripolar set in case $\psi$ is quasi-psh.
In fact, this is true if $F\equiv 1$ and the quasi-psh function $\varphi$ is invariant under the actions of certain Lie groups (cf. \cite{Ber98}, \cite{DZZ17}).
\end{rmk}

By the second-countability, we can choose a countable collection $\{U_{p_{j}}\}$ such that $\bigcup_{j}U_{p_{j}}=\pi^{-1}(U)\setminus C$ and the set
\[N(U_{p_{j}}, f)=\left\{[s]\in P(W)\setminus\mu(C); \quad \int_{U_{p_{j}}\cap\mu^{-1}([s])}|F|^{2}e^{-2\widetilde{\varphi}}dV_{G_{p}}=+\infty\right\}\] 
has measure zero in $P(W)$ for all $j$. 
Let \[N(f)=\bigcup N(U_{p_{j}}, f).\]
It is obvious that $N(f)$ has measure zero.

Let $K\Subset U$ be a compact subset.
If $[s]\notin\mu(C)$, then $\mu^{-1}([s])$ is smooth and the analytic subset $\mu^{-1}([s])\cap\pi^{-1}(K)$ is compact.
Since $\mu^{-1}([s])\cap C=\emptyset$, the compact set $\mu^{-1}([s])\cap\pi^{-1}(K)$ can be covered by finitely many $U_{p_{j}}$.
If, moreover, $s\notin N(f)$, then $[s]\notin N(U_{p_{j}}, f)$ and hence
\begin{equation}
\int_{U_{p_{j}}\cap\mu^{-1}([s])}|F|^{2}e^{-2\widetilde{\varphi}}dV_{G_{p}}<+\infty, \quad \forall~ j.
\end{equation}
Let $dS$ be a smooth volume form on $S=s^{-1}(0)\cong\mu^{-1}([s])$.
Then we have
\begin{equation}
\int_{S\cap K}|f|^{2}e^{-2\varphi}dV_{S}=\int_{\mu^{-1}([s])\cap\pi^{-1}(K)}|F|^{2}e^{-2\widetilde{\varphi}}dV_{S}<+\infty.
\end{equation}
In other words, $|f|^{2}e^{-2\varphi}$ is locally integrable on $S\cap U$.
Therefore, \[f|_{S}\in\mathcal{I}(\varphi|_{S\cap U}).\]
After shrinking $U$, we may assume that $\mathcal{I}(\varphi)|_{U}$ is globally generated by $f_{1}, \cdots, f_{m}\in\mathscr{O}(U)$ and
\[\int_{U}|f_{k}|^{2}e^{-2\varphi}dV<+\infty, \quad 1\leqslant k\leqslant m.\]
The set $N(U)=\cup_{k=1}^{m}N(f_{k})$ has measure zero. 
If $[s]\notin N(U)$, then $f_{k}|_{S}\in\mathcal{I}(\varphi|_S)$ for all $k$ and hence $\mathcal{I}(\varphi)|_{S\cap U}=\mathcal{I}(\varphi|_{S\cap U})$. 
Finally, we may take at most countable many $U_{i}$ covering $X$ so that $N(U_{i})\subset P(W)$ has measure zero and \[\mathcal{I}(\varphi)|_{s^{-1}(0)\cap U}=\mathcal{I}(\varphi|_{s^{-1}(0)\cap U})\] 
for $[s]\notin N(U_{k})\bigcup\mu(C)$.
Let
\[N=\left\{[s]\in P(W)\setminus\mu(C); \quad \mathcal{I}(\varphi)|_{S}\neq\mathcal{I}(\varphi|_{S})\right\}. \]
Then $N\subset\bigcup_{k}N(U_{i})$ has measure $0$.
Therefore, the set
\[P(W)\setminus B=\mu(C)\bigcup N \]
has measure zero in $P(W)$.
\end{proof}

Let $X$ be a complex manifold (not necessary compact).
A complete linear system on $X$ is defined as the set of all effective divisors linearly equivalent to some given divisor $D$. It is denoted $|D|$. 
Let $L$ be the line bundle associated to $D$. 
In the case that $X$ is compact the set $|D|$ is in natural bijection with $\left(H^{0}\left(X, L\right)\setminus\{0\}\right)/\mathbb{C}^{\ast}$ and is therefore a projective space.

A linear system $\mathfrak {d}$ is then a projective subspace of a complete linear system, so it corresponds to a vector subspace $W$ of $H^{0}\left(X, L\right)$. The dimension of the linear system $\mathfrak {d}$ is its dimension as a projective space. Hence $\dim\mathfrak{d}=\dim W-1$.

\begin{cor}
Let $\mathfrak{d}$ be a base point free linear system on a complex manifold $X$ with $1\leqslant\dim\mathfrak{d}<+\infty$, and let $\varphi$ be a quasi-plurisubharmonic function on $X$. 
If we put
\[\mathfrak{b}=\Big\{S\in\mathfrak{d}; \quad S~\text{is smooth and} ~\mathcal{I}(\varphi)|_{S}=\mathcal{I}(\varphi|_{S}) \Big\},\]
then $\mathfrak{d}\setminus\mathfrak{b}$ has measure zero in $\mathfrak{d}$.
\end{cor}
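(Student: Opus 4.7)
The plan is to apply Theorem \ref{thm: Res} directly, after translating the classical linear-system terminology into the bundle/section language of that theorem. Let $L$ be the holomorphic line bundle on $X$ associated to any divisor in $\mathfrak{d}$, and let $W \subset H^{0}(X,L)$ be the finite-dimensional subspace corresponding to $\mathfrak{d}$, so that $\dim W = \dim\mathfrak{d}+1 \geqslant 2$ and $\mathfrak{d}$ is identified with (a subset of) $P(W)$. Since $L$ has rank one, the fiber $L_{x}$ is generated by $W$ exactly when some $s \in W$ is non-zero at $x$, which is the base-point-free hypothesis on $\mathfrak{d}$. This puts us in the exact setting of Theorem \ref{thm: Res} with $F = L$.

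With this dictionary in place, the set $B \subset P(W)$ produced by Theorem \ref{thm: Res} is by construction the same as $\mathfrak{b} \subset \mathfrak{d}$: both consist of the classes $[s]$ for which $s^{-1}(0)$ is smooth and satisfies $\mathcal{I}(\varphi)|_{s^{-1}(0)} = \mathcal{I}(\varphi|_{s^{-1}(0)})$. Hence the conclusion ``$P(W)\setminus B$ has measure zero in $P(W)$'' of Theorem \ref{thm: Res} translates verbatim to ``$\mathfrak{d}\setminus\mathfrak{b}$ has measure zero in $\mathfrak{d}$'', once $\mathfrak{d}$ carries the Lebesgue measure class induced from $P(W)$.

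The only hypothesis of Theorem \ref{thm: Res} that still needs to be checked is that $W$ has no non-vanishing section. This is automatic whenever $L$ is non-trivial, because a nowhere-vanishing global section of a line bundle is a trivialization. In the degenerate case where $L$ is trivial and $W$ does contain non-vanishing sections, each such $s$ satisfies $s^{-1}(0)=\emptyset$, so its class lies vacuously in $\mathfrak{b}$. Moreover, an inspection of the proof of Theorem \ref{thm: Res} shows that the non-vanishing hypothesis is used there only to ensure surjectivity of $\mu : P(E) \to P(W)$, while the Fubini-type measure-zero argument on $\mathrm{Im}(\mu)$ goes through regardless; the complement of $\mathrm{Im}(\mu)$ in $P(W)$ is precisely the locus of non-vanishing sections, which is already contained in $\mathfrak{b}$.

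I expect the main, albeit minor, obstacle to be this bookkeeping around the trivial-line-bundle edge case, together with the clean identification $\mathfrak{d}\leftrightarrow P(W)$ as measure spaces. No new analytic estimates are required: all the substantive content — the Fubini argument, handling the critical locus of $\mu$, and the second-countability covering — is already carried out in the proof of Theorem \ref{thm: Res}.
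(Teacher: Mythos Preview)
Your proposal is correct and matches the paper's approach: the paper presents this corollary as an immediate specialization of Theorem~\ref{thm: Res} (Theorem~\ref{thm: MB}) to the case $F=L$ a line bundle, without a separate proof. Your extra bookkeeping for the degenerate case where $L$ is trivial and $W$ contains non-vanishing sections is sound and goes slightly beyond what the paper spells out, but no new idea is needed.
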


It is clear that $\mathfrak{b}$ is dense in $\mathfrak{d}$ when $\mathfrak{d}\setminus\mathfrak{b}$ has measure zero.
So the above corollary implies Theorem 1.10 in \cite{FuMa16}.

\begin{rmk}
One cannot expect that $\mathfrak{d}\setminus\mathfrak{b}$ is a countable union of analytic subsets of $\mathfrak{d}$ in general. For counterexamples, one can refer to Example 3.12 in \cite{FuMa16}.
\end{rmk}


\section{Restriction formula on complex singularity exponents}

The Ohsawa-Takegoshi $L^{2}$ extension theorem implies the following important monotonicity result.
\begin{prop}[\cite{DeKo01}]\label{prop: mon}
Let $\varphi$ be a quasi-psh function on a complex manifold $X$, and let $Y\subset X$ be a complex submanifold such that $\varphi|_{Y}\not\equiv -\infty$ on every connected component of $Y$. Then, if $K$ is a compact subset of $Y$, we have
\begin{equation}
c_{K}\left(\varphi|_{Y}\right)\leqslant c_{K}(\varphi).
\end{equation}
\end{prop}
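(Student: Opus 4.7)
The plan is to invoke the Ohsawa--Takegoshi $L^{2}$ extension theorem to transfer integrability of $e^{-2c\varphi|_{Y}}$ on $Y$ to integrability of $e^{-2c\varphi}$ on a neighborhood of $K$ in $X$. Concretely, given $c<c_{K}(\varphi|_{Y})$, the aim is to extend the constant function $1$ from $Y$ to a holomorphic function $F$ on a neighborhood of $K$ in $X$, with an $L^{2}$ estimate weighted by $e^{-2c\varphi}$ controlled by the integral of $e^{-2c\varphi|_{Y}}$ on $Y$. Since $F|_{Y}\equiv 1$, continuity forces $|F|$ to be bounded below near $K$, which in turn forces $e^{-2c\varphi}$ to be locally integrable.

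I would proceed as follows. Fix $c\in(0,c_{K}(\varphi|_{Y}))$; by definition there is an open neighborhood $V$ of $K$ in $Y$ such that $\int_{V}e^{-2c\varphi|_{Y}}\,dV_{Y}<+\infty$. Cover $K$ by finitely many coordinate polydisks $U_{j}\subset X$ in which $Y$ is the common zero set of part of the coordinates, and write $\varphi=\psi_{j}+u_{j}$ on $U_{j}$ with $\psi_{j}$ psh and $u_{j}$ smooth. Applying Ohsawa--Takegoshi on the bounded pseudoconvex domain $U_{j}$ with weight $2c\psi_{j}$, the constant $1$ on $U_{j}\cap Y$ extends to $F_{j}\in\mathscr{O}(U_{j})$ with
\begin{equation*}
\int_{U_{j}}|F_{j}|^{2}e^{-2c\psi_{j}}\,dV\leqslant C_{j}\int_{U_{j}\cap Y}e^{-2c\psi_{j}|_{Y}}\,dV_{Y}.
\end{equation*}
After absorbing the bounded factors $e^{\pm 2cu_{j}}$ into the constant and shrinking so that $U_{j}\cap Y\subset V$, the right-hand side is finite; hence $\int_{U_{j}}|F_{j}|^{2}e^{-2c\varphi}\,dV<+\infty$.

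Since $F_{j}$ is holomorphic and satisfies $F_{j}\equiv 1$ on $U_{j}\cap Y$, there is a smaller open set $U_{j}'$ with $K\cap U_{j}\subset U_{j}'\Subset U_{j}$ on which $|F_{j}|\geqslant 1/2$. Then
\begin{equation*}
\int_{U_{j}'}e^{-2c\varphi}\,dV\leqslant 4\int_{U_{j}}|F_{j}|^{2}e^{-2c\varphi}\,dV<+\infty,
\end{equation*}
so $e^{-2c\varphi}$ is integrable on the neighborhood $\bigcup_{j}U_{j}'$ of $K$, giving $c\leqslant c_{K}(\varphi)$. Letting $c\nearrow c_{K}(\varphi|_{Y})$ yields the claim.

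The main obstacle is the careful setup of the Ohsawa--Takegoshi extension in the quasi-psh setting: the classical statement is phrased for a psh weight on a bounded pseudoconvex domain, so one must localize, split $\varphi$ into a psh part plus a smooth part, and verify that the smooth perturbation as well as the choice of volume forms on $X$ and $Y$ only changes integrability by bounded multiplicative factors. A secondary subtlety is that the hypothesis $\varphi|_{Y}\not\equiv-\infty$ on each component of $Y$ is exactly what is needed to guarantee that $c_{K}(\varphi|_{Y})>0$, so that one can actually start with a nontrivial $c$ in the argument above.
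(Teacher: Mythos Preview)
The paper does not give its own proof of this proposition; it simply cites \cite{DeKo01} and remarks that ``the Ohsawa--Takegoshi $L^{2}$ extension theorem implies the following important monotonicity result.'' Your argument is correct and is exactly the standard proof from \cite{DeKo01} that the paper is alluding to: extend the constant $1$ off $Y$ with an $L^{2}$ bound for the weight $e^{-2c\varphi}$, then use $|F|\geqslant 1/2$ near $K$ to deduce integrability of $e^{-2c\varphi}$ on $X$. One small quibble: your closing remark slightly overstates the role of the hypothesis $\varphi|_{Y}\not\equiv -\infty$; it is there to ensure $\varphi|_{Y}$ is a genuine quasi-psh function so that $c_{K}(\varphi|_{Y})$ is meaningful, and when $c_{K}(\varphi|_{Y})=0$ the inequality is trivial anyway.
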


\begin{thm}\label{thm: cse}
Let $\varphi$ be a quasi-psh function on a complex manifold $X$.
Let $F$ be a holomorphic vector bundle over $X$ and let $W$ be a finite dimensional subspace of $H^{0}\left(X, F\right)$ such that $W$ generates all fibers $F_{x}, ~x\in X$.
Let $P(W)$ denote the projective space of $W$ and 
\begin{equation*}
Q=\Big\{[s]\in P(W) \big| ~S=s^{-1}(0) ~\text{is smooth and}~c_{x}\left(\varphi|_S\right)=c_{x}(\varphi), ~\forall x\in S\Big\}.
\end{equation*}
If $W$ has no non-vanishing sections, then $P(W)\setminus Q$ has measure zero in $P(W)$.
\end{thm}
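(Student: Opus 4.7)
The plan is to reduce the statement to the restriction formula on multiplier ideal sheaves (Theorem \ref{thm: Res}) applied to the family of rescaled quasi-psh functions $c\varphi$ with $c\in\mathbb{Q}_{>0}$. The bridge is the tautology that, for any quasi-psh function $\psi$ on $X$ and any point $x\in X$,
\[ c\leqslant c_x(\psi) \quad\Longleftrightarrow\quad 1\in\mathcal{I}(c\psi)_x, \]
so the question on complex singularity exponents at a point is equivalent to a question about which constants belong to certain multiplier ideals.

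Concretely, I would apply Theorem \ref{thm: Res} to each quasi-psh function $c\varphi$ to obtain a measure-zero set $N_c\subset P(W)$ such that every $[s]\in P(W)\setminus N_c$ satisfies that $S=s^{-1}(0)$ is smooth and $\mathcal{I}(c\varphi|_S)=\mathcal{I}(c\varphi)|_S$. Setting $N=\bigcup_{c\in\mathbb{Q}_{>0}}N_c$, which is still of measure zero as a countable union, I then claim $P(W)\setminus N\subset Q$. To verify this, fix $[s]\notin N$ and $x\in S$; Proposition \ref{prop: mon} already gives $c_x(\varphi|_S)\leqslant c_x(\varphi)$ in the non-degenerate case. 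For the reverse inequality, pick any positive rational $c<c_x(\varphi)$; then $1\in\mathcal{I}(c\varphi)_x$, so its image in $\mathscr{O}_{S,x}$ lies in $\mathcal{I}(c\varphi)|_{S,x}$, which by the choice of $[s]$ equals $\mathcal{I}(c\varphi|_S)_x$. Hence $e^{-2c\varphi|_S}$ is integrable near $x$ on $S$, yielding $c\leqslant c_x(\varphi|_S)$. Letting $c$ run through rationals approaching $c_x(\varphi)$ gives $c_x(\varphi)\leqslant c_x(\varphi|_S)$, so equality holds at every $x\in S$.

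The main subtlety I expect is the degenerate case in which $\varphi|_S\equiv -\infty$ on some neighborhood of a point $x\in S$, since then Proposition \ref{prop: mon} does not directly apply and $c_x(\varphi|_S)=0$. However, this case is absorbed by exactly the same equality of ideals: if $\varphi|_S\equiv -\infty$ near $x$, then $\mathcal{I}(c\varphi|_S)_x=0$ for every $c>0$, and the equality $\mathcal{I}(c\varphi|_S)=\mathcal{I}(c\varphi)|_S$ forces $1\notin\mathcal{I}(c\varphi)_x$ for every positive rational $c$, so $c_x(\varphi)=0=c_x(\varphi|_S)$. Thus no enlargement of $N$ beyond the countable union already constructed is required, and $P(W)\setminus Q\subset N$ has measure zero in $P(W)$, as desired.
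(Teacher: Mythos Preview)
Your argument is correct and, in fact, more economical than the paper's own proof. The paper does not reduce to Theorem~\ref{thm: Res}; instead it re-runs the Fubini argument from scratch: it fixes a countable dense set $\{x_j\}\subset X$, a shrinking family of balls $B_{jk}$, and rational $c<c_{\overline{B}_{jk}}(\varphi)$, then shows directly (via the incidence diagram $P(E)\to P(W)$ and Fubini) that for each triple $(j,k,c)$ the set of $[s]$ for which $e^{-2c\varphi}$ fails to be $L^1$ on $S\cap\overline{B}_{jk}$ has measure zero. The union over all $(j,k,c)$ gives the exceptional set. Your route short-circuits this by observing that the only information needed at each point is whether the germ of the constant function $1$ lies in the relevant multiplier ideal, and that Theorem~\ref{thm: Res} applied to the countable family $\{c\varphi:c\in\mathbb{Q}_{>0}\}$ already controls this. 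This is a cleaner packaging and makes the logical dependence on Theorem~\ref{thm: Res} explicit; the paper's direct approach, on the other hand, is self-contained and would survive even without having proved the full ideal-sheaf restriction formula first.

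One small imprecision worth fixing: the displayed ``tautology'' $c\leqslant c_x(\psi)\Longleftrightarrow 1\in\mathcal{I}(c\psi)_x$ can fail at the endpoint $c=c_x(\psi)$. What you actually use, and what is true, is $c<c_x(\psi)\Rightarrow 1\in\mathcal{I}(c\psi)_x$ together with $1\in\mathcal{I}(c\psi)_x\Rightarrow c\leqslant c_x(\psi)$; your subsequent argument only invokes these directions, so nothing is affected. Your handling of the degenerate case $\varphi|_S\equiv-\infty$ near $x$ is also correct and is a point the paper's proof does not address explicitly.
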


\begin{proof}
Let $\left\{x_{j}\right\}_{j\in J}$ be a countable dense subset of $X$.
Let $g$ be a complete Riemannian metric on $X$.
Then the collection of open balls
\[B_{jk}=\left\{x\in X; ~\operatorname{dist}(x, x_{j})<\frac{1}{k}\right\}, \quad j\in J, ~k\geqslant 1\]
is an open cover of $X$ and $\overline{B}_{jk}\Subset X$ is compact.
The set
\[Z=\Big\{[s]\in P(W); \quad s^{-1}(0) ~\text{is not smooth}\Big\}\]
has Lebesgue measure zero by Theorem \ref{thm: Ber}.
Fix $c\in\mathbb{Q}\cap[0, c_{\bar{B}_{jk}}(\varphi))$ and set
\begin{equation*}
E_{j, k, c}=\left\{[s]\in P(W)\setminus Z;  ~e^{-2c\varphi|_{S}}\notin L^{1}~\text{on}~S\cap\overline{B}_{jk}\right\},
\end{equation*}
where $S=s^{-1}(0)$ is the submanifold of $X$.

We next show that $E_{j, k, c}$ has measure zero.
Again, let us consider the diagram
\begin{equation}
\begin{CD}
P(E) @>>{\mu}> P(W) \\
@VV\pi V \\
X 
\end{CD}
\end{equation}
Since $c<c_{\bar{B}_{jk}}(\varphi)$, the function $e^{-2c\varphi}$ is integrable on a neighborhood of $\overline{B}_{jk}$.
If we set $\widetilde{\varphi}=\pi^{\ast}\varphi$, then $\widetilde{\varphi}$ is quasi-psh on $P(E)$ and $e^{-2c\widetilde{\varphi}}$ is integrable on a neighborhood of $\pi^{-1}\left(\overline{B}_{jk}\right)$.
Let $C$ be the critical set of $\mu$.
As in the proof of Theorem \ref{thm: Res}, for any point $p\in\pi^{-1}\left(\overline{B}_{jk}\right)$, there is an open neighborhood $U_{p}$ of $p$ such that $U_{p}$ is isomorphic to a product space $\Omega_{p}\times G_{p}$ and the map $\mu|_{U_{p}}$ is given by the natural projection 
\[\Omega_{p}\times G_{p}\to\Omega_{p}.\]
By shrinking $U_{p}$ smaller if necessary, we may assume $e^{-2c\widetilde{\varphi}}$ is integrable on $U_{p}$.
Then we can apply Fubini's theorem to conclude the set 
\[\left\{[s]\in P(W)\setminus\mu(C); \quad e^{-2c\widetilde{\varphi}}\notin L^{1}\left(U_{p}\cap\mu^{-1}([s]) \right)\right\}\]
has measure zero.
By the second-countability, $\pi^{-1}\left(\overline{B}_{jk}\right)\setminus C$ can be covered by a countable collection $\left\{U_{p_{\ell}}\right\}$. 
So the set
\[\left\{[s]\in P(W)\setminus\mu(C); \quad e^{-2c\widetilde{\varphi}}\notin L^{1}\left(\pi^{-1}\left(\overline{B}_{jk}\right)\cap\mu^{-1}([s])\right)\right\}\]
has measure zero too.
Note that the map
\[\pi^{-1}\left(\overline{B}_{jk}\right)\cap\mu^{-1}([s])\xlongrightarrow{\pi}\overline{B}_{jk}\cap s^{-1}(0)\]
is isomorphic.
Thus we can conclude
\[N_{j, k, c}=\left\{[s]\in P(W)\setminus\mu(C); \quad e^{-2c\varphi}\notin L^{1}\left(\overline{B}_{jk}\cap s^{-1}(0)\right)\right\}\]
has measure zero.

Now we define 
\[N=\bigcup_{j, k, c}N_{j, c, k}, \quad j\in J, ~ k\geqslant 1, ~ c\in\mathbb{Q}\cap\left[0, c_{\bar{B}_{jk}}(\varphi)\right).\]
Then $N\cup Z\subset P(W)$ has measure zero.
Suppose \[[s]\in P(W)\setminus\left(Z\cup N\right), \quad S=s^{-1}(0).\]
We claim that
\[c_{x}\left(\varphi\right)=c_{x}\left(\varphi|_{S}\right), \quad \forall x\in S.\]
To see this, let $c$ be a rational number such that $0\leqslant c< c_{x}(\varphi)$. 
Then there is a neighborhood $W$ of $x$ in $X$ such that $e^{-2c\varphi}$ is integrable on $W$.
We may assume that the ball $B\left(x, \frac{2}{k}\right)$ of center $x$ and radius $\frac{2}{k}$ is contained in $W$.
Since $\{x_{j}\}_{j\in J}$ is dense in $X$, we can find an index $j\in J$ such that $\operatorname{dist}\left(x_{j}, x\right)<\frac{1}{k}$.  Then
\[x\in B_{jk}=B\left(x_{j}, \frac{1}{k}\right)\subset B\left(x, \frac{2}{k}\right)\subset W.\]
It is obvious that $e^{-2c\varphi}$ is integrable on $B_{jk}$. 
Now $S$ is smooth since $[s]\notin Z$ and $e^{-2c\varphi|_{S}}$ is integrable on $B_{jk}\cap S$ because $[s]\notin N_{j, k, c}$. It follows that $e^{-2c\varphi|_{S}}$ is integrable in a neighborhood of $x\in S$ and hence $c_{x}\left(\varphi|_{S}\right)\geqslant c$.
So we can get the desired inequlity
\[c_{x}\left(\varphi|_{S}\right)\geqslant c_{x}(\varphi).\]
Finally, we have $c_{x}\left(\varphi|_{S}\right)= c_{x}(\varphi)$ because the inequality in the opposite direction is always true by Proposition \ref{prop: mon}.
\end{proof}

We next discuss the restriction formula on jumping numbers.
Let $\varphi$ be a quasi-psh function on a complex manifold $X$ and let $\mathscr{I}\subset\mathscr{O}_{X}$ be a nonzero coherent ideal sheaf.
The jumping number $c_{x}^{\mathscr{I}}(\varphi)$ is defined as follows (see \cite{JoMu14}):
\[c_{x}^{\mathscr{I}}(\varphi)=\sup\left\{c\geqslant 0; \quad \left|\mathscr{I}\right|^{2}e^{-2c\varphi} ~\text{is locally integrable at}~ x\right\},\]
where $\left(f_{j}\right)_{1\leqslant j\leqslant N}$ are local generators of $\mathscr{I}$ and $\left|\mathscr{I}\right|^{2}=\sum_{j=1}^{N}|f_{j}|^{2}$.
When $\mathscr{I}=\mathscr{O}_{X}$, the jumping number reduces to the complex singularity exponent.
Arguments similar to those in the proof of Theorem \ref{thm: Res} and Theorem \ref{thm: cse} easily yield 

\begin{thm}\label{thm: cse}
Let $\varphi$ be a quasi-psh function on a complex manifold $X$ and let $\mathscr{I}\subset\mathscr{O}_{X}$ be a nonzero coherent ideal sheaf.
Let $F$ be a holomorphic vector bundle over $X$ and let $W$ be a finite dimensional subspace of $H^{0}\left(X, F\right)$ such that $W$ generates all fibers $F_{x}, ~x\in X$.
Let $P(W)$ denote the projective space of $W$.
If $W$ has no non-vanishing sections, then there is a measure zero set $N\subset P(W)$ such that for every $[s]\notin Z$, the subvariety $S=s^{-1}(0)$ is smooth and 
\[c_{x}^{\mathscr{I}}(\varphi)\leqslant c_{x}^{\mathscr{I}|_{S}}\left(\varphi|_S\right), \quad \forall x\in S.\]
\end{thm}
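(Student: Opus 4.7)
The plan is to mimic the Fubini-theorem argument used in the proofs of Theorem \ref{thm: Res} and Theorem \ref{thm: cse}, now with the density $e^{-2c\varphi}$ replaced by $|\mathscr{I}|^{2}e^{-2c\varphi}$. Concretely, I would work with the same diagram
\[
\begin{CD}
P(E) @>>{\mu}> P(W) \\
@VV\pi V \\
X
\end{CD}
\]
from the proof of Theorem \ref{thm: Ber}, write $C$ for the critical set of $\mu$, and set $Z=\mu(C)$, which has measure zero in $P(W)$. Fix a complete Riemannian metric $g$ on $X$, a countable dense subset $\{x_{j}\}_{j\in J}$ of $X$, and the associated countable base of geodesic balls $B_{jk}$. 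After refining, I can assume that on each $\overline{B}_{jk}$ the ideal $\mathscr{I}$ is generated by finitely many holomorphic functions $f_{jk,1},\dots,f_{jk,N_{jk}}$; by definition of the pullback ideal, the restrictions $f_{jk,\ell}|_{S}$ generate $\mathscr{I}|_{S}$ on $S\cap\overline{B}_{jk}$ for any submanifold $S\subset X$.

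For every rational $c\in[0,c_{\overline{B}_{jk}}^{\mathscr{I}}(\varphi))$ the function $\sum_{\ell}|f_{jk,\ell}|^{2}e^{-2c\varphi}$ is integrable on a neighborhood of $\overline{B}_{jk}$. Pulling back by $\pi$, the function $\sum_{\ell}|\pi^{\ast}f_{jk,\ell}|^{2}e^{-2c\pi^{\ast}\varphi}$ is integrable on a neighborhood of $\pi^{-1}(\overline{B}_{jk})$ in $P(E)$. Covering $\pi^{-1}(\overline{B}_{jk})\setminus C$ by countably many product charts $U_{p}\cong\Omega_{p}\times G_{p}$ on which $\mu$ is the first projection, and applying Fubini on each, the set
\[
E_{j,k,c}=\Bigl\{[s]\in P(W)\setminus Z\;\Big|\;\sum_{\ell}|f_{jk,\ell}|_{S}|^{2}e^{-2c\varphi|_{S}}\notin L^{1}(\overline{B}_{jk}\cap S)\Bigr\}
\]
has measure zero. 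I then define $N=Z\cup\bigcup_{j,k,c}E_{j,k,c}$, a measure zero subset of $P(W)$, the union being over $j\in J$, $k\geqslant 1$ and $c\in\mathbb{Q}\cap[0,c_{\overline{B}_{jk}}^{\mathscr{I}}(\varphi))$.

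To finish, take any $[s]\in P(W)\setminus N$, any $x\in S=s^{-1}(0)$, and any rational $c<c_{x}^{\mathscr{I}}(\varphi)$. By definition there is a neighborhood of $x$ on which $|\mathscr{I}|^{2}e^{-2c\varphi}$ is integrable; choosing $k$ large and $j$ with $\operatorname{dist}(x_{j},x)<1/k$, I can arrange $x\in B_{jk}$ and $c<c_{\overline{B}_{jk}}^{\mathscr{I}}(\varphi)$. Since $[s]\notin E_{j,k,c}$, the function $|\mathscr{I}|_{S}|^{2}e^{-2c\varphi|_{S}}$ is integrable on $\overline{B}_{jk}\cap S$, hence in a neighborhood of $x$ in $S$, so $c\leqslant c_{x}^{\mathscr{I}|_{S}}(\varphi|_{S})$. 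Letting $c\nearrow c_{x}^{\mathscr{I}}(\varphi)$ through rationals yields $c_{x}^{\mathscr{I}}(\varphi)\leqslant c_{x}^{\mathscr{I}|_{S}}(\varphi|_{S})$ for every $x\in S$.

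The essentially new ingredient over Theorem \ref{thm: cse} is organizational rather than analytic: I must couple the countable cover $\{B_{jk}\}$ with a compatible choice of local generators $\{f_{jk,\ell}\}$, and verify that restricting these generators really produces generators of $\mathscr{I}|_{S}$ so that finiteness of $\int_{\overline{B}_{jk}\cap S}\sum_{\ell}|f_{jk,\ell}|_{S}|^{2}e^{-2c\varphi|_{S}}$ genuinely controls $|\mathscr{I}|_{S}|^{2}$. Once this bookkeeping is set up, the Fubini step and the lower semicontinuity argument transplanted from Theorem \ref{thm: cse} go through verbatim; this is the only point that requires real care.
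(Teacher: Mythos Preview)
Your proposal is correct and follows exactly the approach the paper intends. The paper does not give a separate proof of this statement; it simply says that ``arguments similar to those in the proof of Theorem \ref{thm: Res} and Theorem \ref{thm: cse} easily yield'' the result, and your write-up is precisely that adaptation, with the only new bookkeeping being the choice of local generators $f_{jk,\ell}$ of $\mathscr{I}$ on each $\overline{B}_{jk}$ and the (trivial) observation that their restrictions generate $\mathscr{I}|_{S}$.
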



\section{A short exact sequence}

The following theorem is a special case of Theorem 4 in \cite{Siu69}. We follow the exposition of \cite{Man82}.
\begin{thm}[Siu]
Let $X$ be a compact space and let $\mathscr{F}$ be a coherent analytic sheaf on $X$.
Then there is a locally finite family $\{Y_{i}, ~i\in I\}$ of irreducible analytic subsets of $X$ such that for each $x\in X$ the associated primes of $\mathscr{F}_{x}$ is
\[\operatorname{Ass}_{\mathscr{O}_{X, x}}{\mathscr{F}_{x}}=\{\mathfrak{p}_{x, 1}, \cdots, \mathfrak{p}_{x, r(x)}\},\]
where $\mathfrak{p}_{x, 1}, \cdots, \mathfrak{p}_{x, r(x)}$ are the prime ideals of $\mathscr{O}_{X, x}$ associated to the irreducible components of the germs $Y_{i, x}$~$i\in I$ with $x\in Y_{i}$.
\end{thm}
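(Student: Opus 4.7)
The plan is to globalize local primary decompositions of the coherent sheaf $\mathscr{F}$ via a propagation lemma for associated primes, and then use compactness of $X$ to extract the locally finite family. At each point $x \in X$, the stalk $\mathscr{F}_{x}$ is a finitely generated module over the Noetherian local ring $\mathscr{O}_{X,x}$, so standard commutative algebra gives a primary decomposition with finitely many associated primes $\mathfrak{p}_{x,1}, \ldots, \mathfrak{p}_{x, r(x)}$. Each $\mathfrak{p}_{x,j}$ cuts out an irreducible analytic germ at $x$, which by the theory of analytic local rings (R\"uckert descriptions / Noether normalization) extends to an irreducible analytic subset $Y_{x,j}$ of some open neighborhood $U$ of $x$.

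The crucial step is the propagation lemma: after shrinking $U$, for every $y \in U$ the set $\operatorname{Ass}_{\mathscr{O}_{X,y}}(\mathscr{F}_{y})$ consists exactly of the primes of $\mathscr{O}_{X,y}$ corresponding to the irreducible components of the germs $(Y_{x,j})_{y}$, taken over those indices $j$ with $y \in Y_{x,j}$. This has two halves. For the inclusion $\supset$, one follows the primary subsheaves $\mathscr{F}^{(j)} \subset \mathscr{F}$ realizing the primary decomposition at $x$: their annihilator ideals coherently extend to define $Y_{x,j}$ in a neighborhood, so each prime remains associated at nearby points of $Y_{x,j}$. For the reverse inclusion, one argues by Noetherian induction on $\dim \operatorname{Supp}(\mathscr{F})$: top-dimensional associated primes correspond bijectively to the irreducible components of $\operatorname{Supp}(\mathscr{F})$ (analytic by Remmert's theorem), while the lower-dimensional contribution is handled by applying the induction hypothesis to a suitable quotient supported on the embedded components.

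Given this local statement, cover $X$ by neighborhoods $U$, each of which supplies finitely many irreducible analytic subsets describing the associated primes at every point of $U$. By compactness of $X$, finitely many such neighborhoods suffice. On overlaps the local analytic pieces match because they are intrinsically determined by $\mathscr{F}$ as supports of primary subquotients; gluing then produces a finite, a fortiori locally finite, family $\{Y_{i}, i \in I\}$ of irreducible analytic subsets of $X$ satisfying the conclusion.

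The main obstacle is the propagation lemma, and specifically the analyticity of the locus in $U$ where a given prime is associated. This is Siu's key contribution and requires either the gap-sheaf formalism of Siu-Trautmann or, equivalently, the coherence of the local cohomology sheaves $\underline{H}_{Y}^{0}(\mathscr{F})$ along analytic subsets $Y$; it is by far the most delicate part of the argument, and it is what forces one to go beyond purely commutative-algebraic primary decomposition in each individual stalk.
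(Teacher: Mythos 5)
The paper itself offers no proof of this statement: it is quoted as a special case of Theorem 4 of \cite{Siu69}, following the exposition of \cite{Man82}, and is then used as a black box in Section 5. So the comparison is between your sketch and Siu's actual argument, and there your proposal has a genuine gap rather than a complete alternative. The skeleton (stalkwise primary decomposition, extension of each prime germ to an irreducible analytic germ, a propagation statement, then a covering argument) is the right shape, but the entire mathematical content of the theorem is concentrated in the step you call the propagation lemma, namely that the locus of points where a given prime remains associated to $\mathscr{F}$ is analytic --- equivalently, the coherence of Siu's gap sheaves, or of the local cohomology sheaves $\underline{H}^{0}_{Y}(\mathscr{F})$ --- and you explicitly defer this to ``Siu's key contribution'' without proving it. As written the argument is therefore circular as a proof of the statement: it reduces the theorem to the very coherence result that constitutes it. Note also that even your inclusion ``$\supset$'' is not justified at the level of detail given: a $\mathfrak{p}_{x,j}$-primary submodule of $\mathscr{F}_{x}$ does not canonically extend to a primary subsheaf on a neighborhood, and controlling the associated primes of such an extension at nearby points is again exactly a gap-sheaf/coherence statement, not standard commutative algebra.

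The globalization step is also too quick. Irreducible analytic subsets of small neighborhoods do not glue set-theoretically on overlaps into globally irreducible subsets of $X$: a globally irreducible $Y_{i}$ is in general locally reducible, and a local branch produced at one point need not be the trace of any global analytic set, so ``the local pieces match because they are intrinsic'' does not by itself produce the family $\{Y_{i}\}$. The standard route is to show that suitable unions of associated-prime varieties (stratified by dimension, i.e.\ supports of coherent subquotients of $\mathscr{F}$ built from gap sheaves) are globally defined analytic subsets of $X$, and then to invoke the decomposition of an analytic set into a locally finite family of irreducible components; local finiteness does not come from compactness (which only upgrades ``locally finite'' to ``finite''), but from this decomposition theorem. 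Finally, a small slip: the analyticity of $\operatorname{Supp}\mathscr{F}$ and of the supports of primary subquotients follows from coherence (locally they are zero sets of annihilator ideal sheaves), not from Remmert's proper mapping theorem.
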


\begin{defn}[cf. \cite{Siu69}]
The analytic subsets $Y_{i}, ~i\in I$ of the above theorem are called analytic subsets associated to the sheaf $\mathscr{F}$. The index set $I$ is at most countable since the family $\{Y_{i}, ~i\in I\}$ is locally finite.
\end{defn}

\begin{lem}\label{lem: inj}
Let $L$ be a holomorphic line bundle over a complex manifold $X$.
Let $\mathscr{F}$ be a coherent analytic sheaf on $X$ and let $Y_{i}, ~i\in I$ be the analytic subsets associated to $\mathscr{F}$.
Suppose $s$ is a nonzero section of $L$ and $S=s^{-1}(0)$, then the sequence
\[0\longrightarrow\mathscr{F}\otimes\mathscr{O}(-S)\xrightarrow{\otimes s} \mathscr{F}\]
is exact if and only if $S\not\supset Y_{i}$ for all $i\in I$. 
\end{lem}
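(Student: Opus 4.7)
The plan is to reduce the statement to a stalk-level zerodivisor question, then invoke the standard characterization of zerodivisors via associated primes, and finally translate the conclusion back into the geometric language of the $Y_i$ using their irreducibility.

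First I would work locally. Injectivity of $\otimes s:\mathscr{F}\otimes\mathscr{O}(-S)\to\mathscr{F}$ is a stalk-local condition. Fix $x\in X$, trivialize $L$ in a neighborhood of $x$, and let $g\in\mathscr{O}_{X,x}$ denote the holomorphic function representing $s$ under this trivialization, so that $g^{-1}(0)=S$ near $x$. Under the trivialization, the stalk map $\mathscr{F}_x\otimes\mathscr{O}(-S)_x\to\mathscr{F}_x$ is identified with multiplication by $g$ on $\mathscr{F}_x$. Hence the global sheaf map fails to be injective if and only if, at some $x$, the element $g$ is a zerodivisor on $\mathscr{F}_x$.

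Next I would apply the standard commutative-algebra fact that, for a finitely generated module $M$ over a Noetherian local ring $R$, the set of zerodivisors on $M$ equals the union of the associated primes of $M$. Thus $g$ is a zerodivisor on $\mathscr{F}_x$ if and only if $g\in\mathfrak{p}_{x,j}$ for some $j$. By Siu's theorem cited just before the lemma, each $\mathfrak{p}_{x,j}$ is the prime ideal attached to some irreducible component $V_{x,j}$ of the germ $Y_{i,x}$, for some $i\in I$ with $x\in Y_i$, and the containment $g\in\mathfrak{p}_{x,j}$ translates geometrically to the vanishing of $s$ on the germ $V_{x,j}$.

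The remaining and main step is to promote this local vanishing to the global inclusion $Y_i\subset S$. Because $Y_i$ is irreducible it is pure-dimensional of dimension $d=\dim Y_i$, so each germ component $V_{x,j}$ of $Y_{i,x}$ also has dimension $d$. If $s$ vanishes identically on $V_{x,j}$, then $\{s=0\}\cap Y_i$ has local dimension $d$ at $x$; but if $s|_{Y_i}$ were not identically zero, its zero locus would be a proper analytic subset of the irreducible $Y_i$, necessarily of strictly smaller dimension, a contradiction. Hence $s|_{Y_i}\equiv 0$, that is $Y_i\subset S$. The converse is immediate: if $Y_i\subset S$ for some $i$, pick any $x\in Y_i$; then $s$ vanishes on the entire germ $Y_{i,x}$, hence on each of its irreducible components, and so $g\in\mathfrak{p}_{x,j}$ for every $\mathfrak{p}_{x,j}$ attached to $Y_i$. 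Chaining the equivalences yields the lemma.

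The only delicate point I anticipate is this last pure-dimensionality argument: one must carefully distinguish germ-level irreducible components of $Y_{i,x}$ (which are local objects at a single point) from the globally irreducible set $Y_i$, and invoke that a nonzero holomorphic section of $L$ over the irreducible analytic space $Y_i$ cannot vanish on any top-dimensional analytic subset. The stalkwise reduction and the appeal to associated primes are otherwise completely standard.
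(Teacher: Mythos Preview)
Your proposal is correct and follows essentially the same route as the paper: localize to reduce the map to multiplication by the local equation $g$ of $s$, identify failure of injectivity with $g$ being a zerodivisor on $\mathscr{F}_x$, use that the zerodivisors form the union of the associated primes, and then pass from the germ-level vanishing on a component of $Y_{i,x}$ to the global inclusion $Y_i\subset S$ via irreducibility of $Y_i$. The only cosmetic differences are that the paper spells out the associated-prime step by choosing a maximal annihilator containing $\operatorname{Ann}(\tilde t_x)$, while you quote the zerodivisor characterization directly; conversely, you make the local-to-global step explicit via a dimension argument, whereas the paper simply invokes irreducibility of $Y_i$ without further comment.
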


\begin{proof}
Locally, we may assume the section $s$ is given by a holomorphic function $f$ and the map
$\mathscr{F}\otimes\mathscr{O}(-S)\xrightarrow{\otimes s} \mathscr{F}$ is given by $\mathscr{F}\xrightarrow{f} \mathscr{F}$.

Suppose $s$ vanish on $Y_{i}$ for some $i\in I$.
Let $\mathfrak{p}_{x}$ be the prime ideal associated to an irreducible component of the germ $Y_{i, x}$.
Since $s\neq 0$, $Y_{i}$ is a proper analytic subset of $X$ and hence $\mathfrak{p}_{x}\neq 0$.
We may assume $\mathfrak{p}_{x}=\operatorname{Ann}(t_{x})$ for some nonzero $t_{x}\in\mathscr{F}_{x}$.
Since $s$ vanish on $Y_{i}$, we have $f_{x}\in\mathfrak{p}_{x}=\operatorname{Ann}(t_{x})$.
Thus $f_{x}\cdot t_{x}=0$ and hence $\mathscr{F}_{x}\xrightarrow{f_{x}} \mathscr{F}_{x}$ is not injective.

For the other direction, suppose $\mathscr{F}_{x}\xrightarrow{f_{x}} \mathscr{F}_{x}$ is not injective for some point $x\in X$.
There exists a nonzero section $\tilde{t}_{x}\in\mathscr{F}_{x}$ such that $f_{x}\cdot \tilde{t}_{x}=0$.
Then $f_{x}\in\operatorname{Ann}(\tilde{t}_{x})$.
It is easy to show that every maximal element of the family of ideals 
\[\{\operatorname{Ann}(t_{x}); ~0\neq t\in\mathscr{F}_{x}, ~\operatorname{Ann}(t_{x})\supset\operatorname{Ann}(\tilde{t}_{x})\}\] 
is an associated prime of $\mathscr{F}_{x}$ (cf. \cite{Mat80}).
Suppose $\mathfrak{p}_{x}\supset\operatorname{Ann}(\tilde{t}_{x})$ is an associated prime of $\mathscr{F}_{x}$. 
Then $f_{x}\in\mathfrak{p}_{x}$.
So the section $s$ vanishes on a component of the germ $Y_{i, x}$.
Since $Y_{i}$ is an irreducible analytic subset of $X$, we have $s^{-1}(0)\supset Y_{i}$.
\end{proof}

\begin{lem}\label{lem: gen}
Let $L$ be a holomorphic line bundle over a complex manifold $X$ and let $W$ be a finite dimensional subspace of $H^{0}\left(X, L\right)$ such that $W$ generates all fibers $L_{x}, ~x\in X$.
Let $\mathscr{F}$ be a coherent analytic sheaf on $X$ and let $Y_{i}, ~i\in I$ be the analytic subsets associated to $\mathscr{F}$.
Let $P(W)$ denote the projective space of $W$. Suppose $\dim P(W)\geqslant 1$.
Then the set
\[A=\Big\{[s]\in P(W) \big| ~s^{-1}(0)\supset Y_{i} ~\text{for some}~i\in I\Big\}\]
is a countable union of proper analytic subsets of $P(W)$. 
If, moreover, $X$ is compact, then $A$ is analytic in $P(W)$.
\end{lem}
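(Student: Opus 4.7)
The plan is to write $A$ as a union of projective linear subspaces, one for each associated analytic subset $Y_i$. For each $i \in I$, set
\[W_i = \{s \in W \mid s|_{Y_i} \equiv 0\},\]
which is a linear subspace of $W$ since the restriction map $W \to \Gamma(Y_i, L|_{Y_i})$ is linear. Let $A_i = P(W_i) \subset P(W)$. Because the condition $s^{-1}(0) \supset Y_i$ is equivalent to $s|_{Y_i} \equiv 0$, one has $A_i = \{[s] \in P(W) \mid s^{-1}(0) \supset Y_i\}$ and therefore $A = \bigcup_{i \in I} A_i$.

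The central point to verify is that each $W_i$ is a \emph{proper} subspace of $W$, so that $A_i = P(W_i)$ is a proper projective linear subspace of $P(W)$, in particular a proper analytic subset. To see this, fix $i \in I$ and pick any point $y \in Y_i$ (the subset $Y_i$ is nonempty by construction). By the fibre-generation hypothesis, there exists $s \in W$ with $s(y) \neq 0$; this section does not vanish identically on $Y_i$, so $s \notin W_i$, and hence $W_i \subsetneq W$. This is the only place in the argument where the fibre-generation hypothesis is used.

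The index set $I$ is at most countable, as noted in the definition preceding the lemma (the family $\{Y_i\}$ is locally finite and $X$ is second countable). Consequently, $A = \bigcup_{i \in I} A_i$ expresses $A$ as a countable union of proper analytic subsets of $P(W)$. For the final assertion, when $X$ is compact, a locally finite family on $X$ is automatically finite, so $I$ is finite and $A$ becomes a \emph{finite} union of projective linear subspaces of $P(W)$, which is analytic.

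There is no serious obstacle in this proof: the essential idea is simply that vanishing on $Y_i$ is a linear condition cut out inside $W$, and this condition is nontrivial precisely because $W$ generates the fibre $L_y$ at any point $y \in Y_i$. The hypothesis $\dim P(W) \geq 1$ plays no active role in the argument beyond avoiding the degenerate case where $P(W)$ is a single point.
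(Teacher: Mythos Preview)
Your proof is correct and takes essentially the same approach as the paper. The paper phrases the argument through the incidence bundle $P(E)$ introduced earlier, writing $A=\bigcup_{i}\bigcap_{y\in Y_i}P(E_y)$ with each $P(E_y)$ a hyperplane in $P(W)$; your subspace $W_i$ is precisely $\bigcap_{y\in Y_i}E_y$, so the two arguments coincide up to notation, with yours being slightly more self-contained in spelling out why each $A_i$ is a \emph{proper} subspace.
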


\begin{proof}
Let $E$ be the holomorphic vector bundle given by the exact sequence
\[0\to E\to X\times W\to L\to 0.\]
Let us consider the following diagram
\begin{equation}
\begin{CD}
P(E) @>>{\mu}> P(W) \\
@VV\pi V \\
X 
\end{CD}
\end{equation}
Then we can write
\[A=\bigcup_{i\in I}\left(\bigcap_{y\in Y_{i}}\mu\left(P(E_{y})\right)\right).\]
By the definition of $\mu$, the set $\mu\left(P(E_{y})\right)=P(E_{y})\subset P(W)$ is a hyperplane and hence $\bigcap_{y\in Y_{i}}\mu\left(P(E_{y})\right)$ is a linear subspace of $P(W)$.
Since the family $\{Y_{i}, ~i\in I\}$ is locally finite, the index set $I$ is at most countable. 
So $A$ is at most a countable union of proper analytic subsets of $P(W)$.
If $X$ is compact, then $I$ is finite and hence $A$ is an analytic subset of $P(W)$.
\end{proof}

\begin{thm}
Let $L$ be a holomorphic line bundle over a complex manifold $X$ 
and let $W$ be a finite dimensional subspace of $H^{0}\left(X, L\right)$ such that $W$ generates all fibers $L_{x}, ~x\in X$. 
We denote by $P(W)$ the projective space of $W$.
Let $\varphi$ be a quasi-psh function on $X$ and let $Y_{i}, ~i\in I$ be the analytic subsets associated to $\mathscr{O}_{X}/\mathcal{I}(\varphi)$.
Suppose $L$ is not a trivial line bundle. Then:
\begin{itemize}
\item
The set
\[A=\Big\{[s]\in P(W) \big| ~s^{-1}(0)\supset Y_{i} ~\text{for some}~i\in I\Big\}\]
is a countable union of proper analytic subsets of $P(W)$.
If, moreover, $X$ is compact, then $A$ is analytic in $P(W)$.
\item
Suppose $[s]\in P(W)$ and $S=s^{-1}(0)$, then the sequence
\begin{equation}
 0\longrightarrow\mathcal{I}(\varphi)\otimes\mathscr{O}(-S)\longrightarrow\mathcal{I}(\varphi)\longrightarrow\mathcal{I}(\varphi)|_{S}\longrightarrow 0
\end{equation}
is exact if and only if $[s]\notin A$.
\end{itemize}
\end{thm}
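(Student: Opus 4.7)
The plan is to reduce both assertions to Lemmas \ref{lem: inj} and \ref{lem: gen}, applied to the coherent sheaf $\mathscr{F}=\mathscr{O}_{X}/\mathcal{I}(\varphi)$, whose associated analytic subsets are by hypothesis the $Y_{i}$. For the first bullet point, I would begin by observing $\dim P(W)\geqslant 1$: otherwise $\dim W=1$ and a single generator of $W$ would generate every fiber of $L$ and hence be nowhere vanishing, trivializing $L$ against assumption. Lemma \ref{lem: gen} then yields the statement about $A$ verbatim, including analyticity in the compact case.

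For the second bullet point, the strategy is a short $\operatorname{Tor}$ computation reducing exactness of the displayed restriction sequence to a pointwise injectivity question about multiplication by a local defining function of $S$, which Lemma \ref{lem: inj} already settles. Concretely, I would first tensor the locally free resolution $0\to\mathscr{O}(-S)\to\mathscr{O}_{X}\to\mathscr{O}_{S}\to 0$ with $\mathcal{I}(\varphi)$. Since $\mathscr{O}(-S)$ is invertible, hence flat, the left-most map $\mathcal{I}(\varphi)\otimes\mathscr{O}(-S)\hookrightarrow\mathcal{I}(\varphi)$ is automatically injective and its cokernel is $\mathcal{I}(\varphi)\otimes\mathscr{O}_{S}$. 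Adopting the convention that $\mathcal{I}(\varphi)|_{S}$ denotes the image of the natural map $\mathcal{I}(\varphi)\otimes\mathscr{O}_{S}\to\mathscr{O}_{S}$ (i.e.\ the pulled-back ideal sheaf on $S$), exactness of the displayed sequence becomes equivalent to injectivity of this natural map.

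To identify the obstruction, I would tensor $0\to\mathcal{I}(\varphi)\to\mathscr{O}_{X}\to\mathscr{O}_{X}/\mathcal{I}(\varphi)\to 0$ with $\mathscr{O}_{S}$: the long exact $\operatorname{Tor}$ sequence identifies the kernel of $\mathcal{I}(\varphi)\otimes\mathscr{O}_{S}\to\mathscr{O}_{S}$ with $\operatorname{Tor}_{1}^{\mathscr{O}_{X}}(\mathscr{O}_{X}/\mathcal{I}(\varphi),\mathscr{O}_{S})$. Using once more the resolution $0\to\mathscr{O}(-S)\to\mathscr{O}_{X}\to\mathscr{O}_{S}\to 0$, this $\operatorname{Tor}$ sheaf is the kernel of multiplication by the local defining function of $S$ on $\mathscr{O}_{X}/\mathcal{I}(\varphi)$. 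Lemma \ref{lem: inj} applied to $\mathscr{F}=\mathscr{O}_{X}/\mathcal{I}(\varphi)$ then says this kernel vanishes if and only if $s^{-1}(0)\not\supset Y_{i}$ for every $i\in I$, i.e.\ if and only if $[s]\notin A$, completing the argument.

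The main subtle point will be purely notational: one must be explicit that $\mathcal{I}(\varphi)|_{S}$ denotes the image in $\mathscr{O}_{S}$ rather than the abstract tensor product $\mathcal{I}(\varphi)\otimes\mathscr{O}_{S}$, which carries $\operatorname{Tor}$ torsion precisely when $[s]\in A$. Once this convention is fixed, the proof is just the concatenation of the two preceding lemmas with the one-line Tor calculation, and no further analytic or geometric input is required.
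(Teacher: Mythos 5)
Your argument is correct and follows essentially the same route as the paper: the first bullet is exactly Lemma \ref{lem: gen} (your observation that non-triviality of $L$ forces $\dim P(W)\geqslant 1$ is a point the paper leaves implicit), and for the second bullet the paper likewise identifies the failure of exactness with $\operatorname{Tor}\left(\mathscr{O}_{X}/\mathcal{I}(\varphi),\,\mathscr{O}_{S}\right)$, i.e.\ with the kernel of multiplication by the local equation of $S$ on $\mathscr{O}_{X}/\mathcal{I}(\varphi)$, and then invokes Lemma \ref{lem: inj}. One small repair to your write-up: the injectivity of $\mathcal{I}(\varphi)\otimes\mathscr{O}(-S)\to\mathcal{I}(\varphi)$ does not follow from flatness of $\mathscr{O}(-S)$, since the map is $\mathrm{id}_{\mathcal{I}(\varphi)}\otimes\iota$ with $\iota:\mathscr{O}(-S)\hookrightarrow\mathscr{O}_{X}$, so flatness of the fixed factor $\mathcal{I}(\varphi)$ would be what that argument requires; it holds instead because locally the map is multiplication by the (nonzero) defining function of $S$ on the ideal $\mathcal{I}(\varphi)\subset\mathscr{O}_{X}$, which is torsion free.
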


\begin{proof}
By Lemma \ref{lem: gen}, we only need to prove the second statement.
Let $\mathscr{J}$ be the ideal sheaf of $\mathscr{O}_{X}$ defined by $s\in W$.
There is a natural exact sequence
\begin{equation}
 0\longrightarrow\mathcal{I}(\varphi)\otimes\mathscr{O}(-S)\longrightarrow\mathcal{I}(\varphi)\longrightarrow\mathcal{I}(\varphi)\otimes\left(\mathscr{O}_{X}/\mathscr{J}\right)\longrightarrow 0.
\end{equation}
By definition, the ideal sheaf $\mathcal{I}(\varphi)|_{S}$ is the image of the map
\[\rho: \mathcal{I}(\varphi)\otimes\left(\mathscr{O}_{X}/\mathscr{J}\right)\to\mathscr{O}_{X}\otimes\left(\mathscr{O}_{X}/\mathscr{J}\right)\xrightarrow{\cong}\mathscr{O}_{X}/\mathscr{J}.\]
To obtain the desired exact sequence, we only need to consider the injectivity of the map $\rho$.
From the short exact sequence
 \[0\longrightarrow\mathcal{I}(\varphi)\longrightarrow\mathscr{O}_{X}\longrightarrow\mathscr{O}_{X}/\mathcal{I}(\varphi)\longrightarrow 0\]
we can obtain a long exact sequence
\[0\to\operatorname{Tor}\left(\mathscr{O}_{X}/\mathcal{I}(\varphi), \mathscr{O}_{X}/\mathscr{J}\right)\to\mathcal{I}(\varphi)\otimes\left(\mathscr{O}_{X}/\mathscr{J}\right)\to\mathscr{O}_{X}/\mathscr{J}.\]
Thus $\rho$ is injective if and only if 
\[\operatorname{Tor}\left(\mathscr{O}_{X}/\mathcal{I}(\varphi), \mathscr{O}_{X}/\mathscr{J}\right)=0.\]
From the short exact sequence
 \[0\longrightarrow\mathscr{J}\longrightarrow\mathscr{O}_{X}\longrightarrow\mathscr{O}_{X}/\mathscr{J}\longrightarrow 0\]
we have another exact sequence
\[0\to\operatorname{Tor}\left(\mathscr{O}_{X}/\mathcal{I}(\varphi), \mathscr{O}_{X}/\mathscr{J}\right)\to\left(\mathscr{O}_{X}/\mathcal{I}(\varphi)\right)\otimes\mathscr{J}\to\mathscr{O}_{X}/\mathcal{I}(\varphi).\]
Therefore, the injectivity of $\rho$ is equivalent to the injectivity of the map
\[\left(\mathscr{O}_{X}/\mathcal{I}(\varphi)\right)\otimes\mathscr{J}\to\mathscr{O}_{X}/\mathcal{I}(\varphi).\]
Since the ideal sheaf $\mathscr{J}=\mathscr{O}(-S)$, the above map can be written as
\[\left(\mathscr{O}_{X}/\mathcal{I}(\varphi)\right)\otimes\mathscr{O}(-S)\xrightarrow{\otimes s}\mathscr{O}_{X}/\mathcal{I}(\varphi).\]
By Lemma \ref{lem: inj}, we can conclude that $\rho$ is injective if and only $[s]\notin A$.
Therefore, the sequence 
\begin{equation*}
 0\longrightarrow\mathcal{I}(\varphi)\otimes\mathscr{O}(-S)\longrightarrow\mathcal{I}(\varphi)\longrightarrow\mathcal{I}(\varphi)|_{S}\longrightarrow 0
\end{equation*}
is exact if and only if $[s]\notin A$.
\end{proof}

\begin{cor}
Let $L$ be a holomorphic line bundle over a complex manifold $X$. 
Let $W$ be a finite dimensional subspace of $H^{0}\left(X, L\right)$ such that $W$ generates all fibers $L_{x}, ~x\in X$. 
We denote by $P(W)$ the projective space of $W$.
Let $\varphi$ be a quasi-psh function on $X$.
Suppose $L$ is not a trivial line bundle.
Then there exists a measure zero set $N\subset P(W)$ such that for each $[s]\in P(W)\setminus Z$ the analytic subset $S=s^{-1}(0)$ is smooth, $\mathcal{I}(\varphi|_S)=\mathcal{I}(\varphi)|_{S}$ and the sequence
\begin{equation}
 0\longrightarrow\mathcal{I}(\varphi)\otimes\mathscr{O}(-S)\longrightarrow\mathcal{I}(\varphi)\longrightarrow\mathcal{I}(\varphi|_{S})\longrightarrow 0
\end{equation}
is exact.
\end{cor}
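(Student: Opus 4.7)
The plan is to build $N$ as the union of two measure zero sets: one coming from Theorem \ref{thm: MB}, which controls the restriction formula for multiplier ideals, and the other coming from the preceding theorem, which characterizes precisely when the short exact sequence fails.

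First, I would verify that $W$ has no non-vanishing section, so that Theorem \ref{thm: MB} applies. This is automatic from the hypothesis that $L$ is not trivial: any non-vanishing global section $s$ of $L$ induces a map $\mathscr{O}_X \xrightarrow{\cdot s} L$ that is fiberwise an isomorphism, hence a global isomorphism, contradicting non-triviality. Thus $W \subset H^{0}(X, L)$ contains no non-vanishing sections, and Theorem \ref{thm: MB} applied with $F = L$ supplies a measure zero subset $N_1 \subset P(W)$ such that for every $[s] \in P(W) \setminus N_1$, the divisor $S = s^{-1}(0)$ is smooth and $\mathcal{I}(\varphi|_S) = \mathcal{I}(\varphi)|_S$.

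Next, I would invoke the preceding theorem with $\mathscr{F} = \mathscr{O}_X / \mathcal{I}(\varphi)$: letting $\{Y_i\}_{i \in I}$ denote the associated analytic subsets, the set
\[
A = \Big\{[s] \in P(W) \,\big|\, s^{-1}(0) \supset Y_i \text{ for some } i \in I\Big\}
\]
is a countable union of proper analytic subsets of $P(W)$ by Lemma \ref{lem: gen}, and any such union has Lebesgue measure zero. The second bullet of that theorem then guarantees that for every $[s] \in P(W) \setminus A$, the sequence
\[
0 \longrightarrow \mathcal{I}(\varphi) \otimes \mathscr{O}(-S) \longrightarrow \mathcal{I}(\varphi) \longrightarrow \mathcal{I}(\varphi)|_S \longrightarrow 0
\]
is exact.

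Finally, I would set $N = N_1 \cup A$, which is of measure zero. For any $[s] \in P(W) \setminus N$ both conclusions apply simultaneously: $S$ is smooth, $\mathcal{I}(\varphi)|_S = \mathcal{I}(\varphi|_S)$, and the displayed short exact sequence is exact; substituting the identification into its last nonzero term produces exactly the sequence claimed in the corollary. No serious obstacle arises here: both supporting results are already in place, and the proof amounts to matching hypotheses and unioning two negligible sets. The only small subtleties worth noting are the observation that a non-trivial line bundle admits no non-vanishing section, which legitimizes the use of Theorem \ref{thm: MB}, and the routine fact that a countable union of proper analytic subsets of $P(W)$ has Lebesgue measure zero.
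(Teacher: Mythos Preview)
Your proposal is correct and matches the approach the paper intends: the corollary is stated without proof precisely because it is the immediate combination of Theorem \ref{thm: Res} (your $N_1$) and the preceding theorem on the exact sequence (your $A$). Your only addition is making explicit why $W$ has no non-vanishing section, which is exactly the right justification for invoking Theorem \ref{thm: Res}.
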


The following result is essentially due to Cao (\cite{Cao14}) and Guan-Zhou (\cite{Guan-Zhou15a}).
\begin{thm}
Let $L$ be a holomorphic line bundle over a complex manifold $X$. 
Let $W$ be a finite dimensional subspace of $H^{0}\left(X, L\right)$ such that $W$ generates all fibers $L_{x}, ~x\in X$. 
Then there exists a measure zero set $N\subset P(W)$ such that for each $[s]\in P(W)\setminus Z$ the following statements hold 
\begin{enumerate}[a)]
\item
 the multiplier ideal sheaf $\mathcal{I}(\varphi)$ can be written as
\begin{equation}\label{eq:adj}
\mathcal{I}(\varphi)_{x}=\left\{f\in\mathscr{O}_{X, x}; ~\exists ~U_{x} ~\text{such that} ~\int_{U_{x}}\frac{|f|^{2}}{|s|^{2(1-\varepsilon)}}e^{-2(1+\sigma)\varphi}\mathrm{d}V<+\infty\right\},
\end{equation}
for $0<\sigma\leqslant\sigma_{0}$ and $0<\varepsilon<\sigma$, where $dV$ is a smooth volume form on $X$,
\item
the divisor $S=s^{-1}(0)$ is smooth,
\item
the following sequence
\begin{equation*}
 0\longrightarrow\mathcal{I}(\varphi)\otimes\mathcal{O}(-S)\longrightarrow\mathcal{I}(\varphi)\longrightarrow\mathcal{I}(\varphi|_{S})\longrightarrow 0
\end{equation*}
is exact.
\end{enumerate}
\end{thm}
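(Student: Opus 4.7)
The plan is to combine the corollary immediately preceding this theorem with the adjoint-ideal integral characterization of Cao \cite{Cao14} and Guan-Zhou \cite{Guan-Zhou15a}. The theorem statement appears to omit a clause; I would tacitly invoke the Guan-Zhou strong openness theorem to produce a positive continuous function $\sigma_0$ on $X$ satisfying $\mathcal{I}(\varphi)=\mathcal{I}((1+\sigma_0)\varphi)$, and let $\sigma_0$ be this function throughout the argument.

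Assertions (b) and (c) will be furnished directly by the corollary immediately above: it provides a measure-zero set $N_1\subset P(W)$ such that for every $[s]\notin N_1$ the divisor $S=s^{-1}(0)$ is smooth, the restriction formula $\mathcal{I}(\varphi|_S)=\mathcal{I}(\varphi)|_S$ holds, and the short exact sequence of (c) is exact. No further exceptional set is needed for these two parts.

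The remaining content is the integral characterization (a) of $\mathcal{I}(\varphi)_x$. The inclusion $\supset$ is elementary: if the integral is finite, then local upper-boundedness of $|s|$ makes $|s|^{2(1-\varepsilon)}$ locally bounded, which gives $\int_{U_x}|f|^{2}e^{-2(1+\sigma)\varphi}\,dV<+\infty$; local upper-boundedness of the quasi-psh function $\varphi$ then absorbs $e^{-2\sigma\varphi}$ into a harmless constant, yielding $|f|^{2}e^{-2\varphi}\in L^{1}_{\mathrm{loc}}$ and hence $f\in\mathcal{I}(\varphi)_x$. For the inclusion $\subset$, given $f\in\mathcal{I}(\varphi)=\mathcal{I}((1+\sigma_0)\varphi)$, I would use the smoothness of $S$ from (b) to note that $|s|^{-2(1-\varepsilon)}$ is locally integrable for every $\varepsilon>0$ (since $1-\varepsilon<1$), and then apply generalized H\"older's inequality to distribute the mass of the integrand between the ``strong-openness reserve'' $|f|^{2}e^{-2(1+\sigma_0)\varphi}$ on the one hand and the weight singularity $|s|^{-2(1-\varepsilon)}$ on the other. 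This is the mechanism underlying Cao's argument in \cite{Cao14} and the adjoint-ideal construction of Guan-Zhou in \cite{Guan-Zhou15a}.

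The main obstacle will be the H\"older-exponent bookkeeping in the $\subset$ direction: the conjugate exponents must be chosen so that the power of $|s|^{-1}$ appearing in one factor stays strictly below $2$ (for local integrability near the smooth $S$) while the power of $\varphi$ in the other factor stays within $[1,1+\sigma_0]$ (the range in which strong openness still guarantees local integrability of $|f|^{2}e^{-2\lambda\varphi}$). The prescribed hypothesis $0<\varepsilon<\sigma\leq\sigma_0$ is exactly the region in which these two constraints admit a simultaneous solution; the strict inequality $\varepsilon<\sigma$ quantifies how much of the strong-openness ``room'' is consumed in compensating for the non-integrable-in-the-limit factor $|s|^{-2(1-\varepsilon)}$. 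Setting $N:=N_1$ then completes the argument.
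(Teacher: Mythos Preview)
Your route to (b) and (c) through the preceding corollary is fine; the paper in fact re-derives the middle exactness in (c) directly from the integral description (a) by a separate H\"older estimate, but either path works there. The genuine gap is in the $\subset$ direction of (a): the H\"older bookkeeping you describe cannot close. Using the measure $|f|^{2}\,dV$ and conjugate exponents $p,q$, you need $(1+\sigma)\,p\leqslant 1+\sigma_{0}$ to keep the $e^{-\varphi}$ factor within the strong-openness range, and $(1-\varepsilon)\,q<1$ to keep the $|s|^{-1}$ factor locally integrable near the smooth hypersurface $S$. The second condition forces $p>1/\varepsilon$, and combining with the first gives $1+\sigma<\varepsilon(1+\sigma_{0})$, which is impossible since $\varepsilon<\sigma_{0}$ is small while $1+\sigma>1$. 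No regrouping rescues this: the strong-openness reserve contributes only $\sigma_{0}-\sigma$ in the $\varphi$-exponent, far too little to absorb a weight $|s|^{-2(1-\varepsilon)}$ sitting essentially at the integrability threshold. Concretely, on $\mathbb{C}^{2}$ with $\varphi=\log|z|$, $f=z\in\mathcal{I}(\varphi)$ and $s=z$ (so $S$ is smooth), the integrand in (a) is $|z|^{-2-2\sigma+2\varepsilon}$, divergent for every $0<\varepsilon<\sigma$; the mechanism already fails at the level of exponents.

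The paper does not attempt a pointwise H\"older bound for (a). It instead averages over $[s]\in P(W)$: writing $s=\sum_{j}\tau_{j}s_{j}$ for a basis $(s_{j})$ of $W$ and integrating over the sphere $\{\sum_{j}|\tau_{j}|^{2}=1\}$, a unitary rotation in $\tau$ turns the inner spherical integral into the $y$-independent constant $\int_{\text{sphere}}|\tau_{1}|^{-2(1-\varepsilon)}\,d\tau<\infty$. Fubini then yields
\[
\int_{\text{sphere}}\Bigl(\int_{U_{x}}\frac{|f|^{2}}{|s|^{2(1-\varepsilon)}}\,e^{-2(1+\sigma)\varphi}\,dV\Bigr)\,d\tau<\infty,
\]
so the inner integral is finite for almost every $\tau$. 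Thus (a) produces its \emph{own} measure-zero exceptional set in $P(W)$, coming from this Fubini step (together with a passage to countably many local generators $f$, a countable cover of $X$, and rational $\varepsilon,\sigma$); it is not a consequence of smoothness of $S$ plus strong openness alone. Your final $N$ must include this additional exceptional set, not just $N_{1}$.
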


\begin{proof} 
We can choose $s_{1},s_{2},\cdots,s_{N}$ to be a basis of $W$ so that $\sum_{j=1}^{N}|s_{j}(x)|^{2}\neq0$ for any $x\in X$.
Let $(\tau_{1},\tau_{2},\cdots,\tau_{N})$ be the coordinate of $\mathbb{C}^{N}$.
Suppose \[f\in\mathcal{I}(\varphi)_{x}=\mathcal{I}\left((1+\sigma)\varphi\right)_{x}, \quad 0<\sigma<\sigma_{0}.\] 
Then
\begin{equation*}
\begin{split}
&\int_{\sum_{j=1}^{N}|\tau_{j}|^{2}=1}\mathrm{d}\tau\int_{U_{x}}\frac{|f|^{2}}{|\sum_{j=1}^{N}\tau_{j}s_{j}|^{2(1-\varepsilon)}}e^{-2(1+\sigma)\varphi}\mathrm{d}V \\
=& \int_{U_{x}}\frac{|f|^{2}}{\left|\sum_{j=1}^{N}|s_{j}(y)|^{2}\right|^{1-\varepsilon}}e^{-2(1+\sigma)\varphi}\mathrm{d}V\int_{\sum_{j=1}^{N}|\tau_{j}|^{2}=1}\frac{\mathrm{d}\tau}{\left|\sum_{j=1}^{N}\tau_{j}\frac{s_{j}(y)}{\sqrt{\sum_{j=1}^{N}|s_{j}(y)|^{2}}}\right|^{2(1-\varepsilon)}} \\
=& \int_{U_{x}}\frac{|f|^{2}}{\left|\sum_{j=1}^{N}|s_{j}(y)|^{2}\right|^{1-\varepsilon}}e^{-2(1+\sigma)\varphi}\mathrm{d}V\int_{\sum_{j=1}^{N}|\tau_{j}|^{2}=1}\frac{\mathrm{d}\tau}{|\tau_{1}|^{2(1-\varepsilon)}}<+\infty.
\end{split}
\end{equation*}
For the last equality, one can change coordinate via unitary transformation so that
\begin{equation*}
\int_{\sum_{j=1}^{N}|\tau_{j}|^{2}=1}\frac{\mathrm{d}\tau}{\left|\sum_{j=1}^{N}\tau_{j}\frac{s_{j}(y)}{\sqrt{\sum_{j=1}^{N}|s_{j}(y)|^{2}}}\right|^{2(1-\varepsilon)}}=\int_{\sum_{j=1}^{N}|\tau_{j}|^{2}=1}\frac{\mathrm{d}\tau}{|\tau_{1}|^{2(1-\varepsilon)}}<+\infty.
\end{equation*}
By Fubini's theorem, we can choose $(\tau_{1},\tau_{2},\cdots,\tau_{N})\in\mathbb{C}^{N}$ outside a measure zero set such that the section $s=\sum_{j=1}^{N}\tau_{j}s_{j}$ satisfy
\begin{equation*}
\int_{U_{x}}\frac{|f|^{2}}{|\sum_{j=1}^{N}\tau_{j}s_{j}|^{2(1-\varepsilon)}}e^{-2(1+\sigma)\varphi}\mathrm{d}V<+\infty.
\end{equation*}
Then we can choose $[\tau_{1}: \tau_{2}: \cdots :\tau_{N}]\in\mathbb{C}P^{N-1}$ outside a measure set in $P(W)$.
The first statement is proved. 

By Theorem \ref{thm: Ber} and Theorem \ref{thm: Res}, we can choose $S$ outside a measure zero set so that $S$ is smooth and the restriction $\mathcal{I}(\varphi)\to\mathcal{I}(\varphi|_{S})$ is well-defined.

If $f\in\mathcal{I}(\varphi|_{S})_{x}$, then the Ohsawa-Takegoshi extension theorem implies that 
there exists a $F\in\mathcal{I}(\varphi)_{x}$ such that $F|_{S}=f$ near $x$.
So the the restriction $\mathcal{I}(\varphi)\to\mathcal{I}(\varphi|_{S})$ is surjective.
For the exactness of the middle term, let $f\in\mathcal{I}(\varphi)_{x}$ whose restriction in $\mathcal{I}(\varphi|_{S})_{x}$ is zero.
Then $f$ vanish along $S$ near $x$, hence $\frac{f}{s}\in\mathscr{O}_{x}$. So we can conclude that
\begin{equation}
\int_{U_{x}}\frac{|f|^{2}}{|s|^{4-\eta}}\mathrm{d}V=\int_{U_{x}}\left|\frac{f}{s}\right|^{2}\cdot\frac{1}{|s|^{2-\eta}}\mathrm{d}V<+\infty, \quad \eta>0.
\end{equation}
By H\"{o}lder's inequality, we have
\begin{equation*}
\int_{U_{x}}\frac{|f|^{2}}{|s|^{2}}e^{-2(1+\delta)\varphi}\mathrm{d}V
\leqslant\left(\int_{U_{x}}\frac{|f|^{2}}{|s|^{2(1-\varepsilon)}}e^{-2(1+\sigma)\varphi}\mathrm{d}V\right)^{\frac{1+\delta}{1+\sigma}}\left(\frac{|f|^{2}}{|s|^{\alpha}}\mathrm{d}V\right)^{\frac{\sigma-\delta}{1+\sigma}},
\end{equation*}
where
\begin{equation*}
\alpha=\left[2-2(1-\varepsilon)\frac{1+\delta}{1+\sigma}\right]\frac{1+\sigma}{\sigma-\delta}.
\end{equation*}
If we choose $0<\delta<\frac{\sigma-\varepsilon}{1+\varepsilon}<\sigma$, then $\alpha<4$ and hence
\begin{equation*}
\int_{U_{x}}\frac{|f|^{2}}{|s|^{2}}e^{-2(1+\delta)\varphi}\mathrm{d}V<+\infty.
\end{equation*}
This shows that $\frac{f}{s}\in\mathcal{I}\left((1+\delta)\varphi\right)_{x}=\mathcal{I}(\varphi)_{x}$.
Thus $f\in\mathcal{I}(\varphi)_{x}\otimes\mathscr{O}(-S)_{x}$.
\end{proof}


\end{document}